\newtheorem{theorem}{Theorem}
\newtheorem{prop}{Proposition}
\newtheorem{lemma}{Lemma}    
\newtheorem{fact}{Fact} 
\newtheorem{coro}{Corollary}
\theoremstyle{definition}
\newcommand{\ts}{\hspace{0.5pt}}
\newcommand{\nts}{\hspace{-0.5pt}}
\newcommand{\AAA}{\mathbb{A}}
\newcommand{\RR}{\mathbb{R}\ts}
\newcommand{\II}{\mathbb{I}}
\newcommand{\MM}{\mathbb{M}}
\newcommand{\PP}{\mathbb{P}}
\newcommand{\VV}{\mathbb{V}}
\newcommand{\cA}{\mathcal{A}}
\newcommand{\cB}{\mathcal{B}}
\newcommand{\cC}{\mathcal{C}}
\newcommand{\cD}{\mathcal{D}}
\newcommand{\cE}{\mathcal{E}}
\newcommand{\cF}{\mathcal{F}}
\newcommand{\cG}{\mathcal{G}}
\newcommand{\cH}{\mathcal{H}}
\newcommand{\cM}{\mathcal{M}}
\newcommand{\cP}{\mathcal{P}}
\newcommand{\pa}{\hphantom{g}\nts\nts}
\newcommand{\bs}{\boldsymbol}
\newcommand{\dd}{\,\mathrm{d}}
\newcommand{\chn}{\ts\mathrm{chn}}
\newcommand{\supp}{\ts\mathrm{supp}}
\newcommand{\ee}{\ts\mathrm{e}}
\newcommand{\one}{\mathbbm{1}}
\newcommand{\rtot}[1]{\varrho^{#1}_{\mathrm{tot}}}
\newcommand{\pmin}{\ts\ts\underline{\nts\nts 0\nts\nts}\ts\ts}
\newcommand{\pmax}{\ts\ts\underline{\nts\nts 1\nts\nts}\ts\ts}
\newcommand{\bigtim}{\mbox{\LARGE $\times$}}
\newcommand{\udo}[1]{\underaccent{$\text{.}$}{#1\ts}\nts}
\begin{document}

\title
{The recombination equation 
for interval partitions}

\author{Michael Baake}
\address{Fakult\"at f\"ur Mathematik, Universit\"at Bielefeld, 
         Postfach 100131, 33501 Bielefeld, Germany}

\author{Elham Shamsara}
\address{Department of Applied Mathematics, School of
 Mathematical Sciences, \newline
 \indent Ferdowsi University of Mashhad (FUM), Mashhad, Iran}

\subjclass[2010]{34G20, 06B23, 92D10, 60J25}

\keywords{Recombination equation, population genetics, Markov
generator, interval partitions, measure-valued equations, 
nonlinear ODEs, closed solution}

\begin{abstract} 
  The general deterministic recombination equation in continuous time
  is analysed for various lattices, with special emphasis on the
  lattice of interval (or ordered) partitions. Based on the recently
  constructed \cite{main} general solution for the lattice of all
  partitions, the corresponding solution for interval partitions is
  derived and analysed in detail. We focus our attention on the
  recursive structure of the solution and its decay rates, and also
  discuss the solution in the degenerate cases, where it comprises
  products of monomials with exponentially decaying factors. This can
  be understood via the Markov generator of the underlying
  partitioning process that was recently identified.  We use interval
  partitions to gain insight into the structure of the solution, while
  our general framework works for arbitrary lattices.
\end{abstract}

\maketitle

\section{Introduction}

Recombination is an important mechanism in population genetics. It is
one of the fundamental processes that underlies the time evolution of
a population of individuals under sexual reproduction; see
\cite{Buerger,L,Durrett} for general background and
\cite{Gei,Ben,D} for important contributions. Recombination is a
stochastic process that, in continuous time and in the deterministic
limit of large populations, leads to a nonlinear ordinary differential
equation (ODE) in the Banach space of finite measures on a locally
compact product space for the types that characterise the individuals;
see \cite{BB,main} and references therein. The flow of this ODE
generally acts on an infinite-dimensional space, but has a number of
nice properties, including preservation of positivity under the
forward flow as well as norm preservation in the positive sector.

Moreover, the recombination ODE admits a reduction to a
finite-dimensional nonlinear system of ODEs via a suitable ansatz in
form of a finite convex combination of probability measures that
derive from the initial condition by the application of a finite set
of (also nonlinear) operators, which are known as
\emph{recombinators}. The explicit solution of this ODE system was
recently achieved in \cite{main}, building on previous work
\cite{BB,MB,EB-ICM,P}, in a recursive way. The solution formula
derived there was explicit enough to establish the exponentially fast
convergence of the solution to a unique equilibrium that only depends
on the initial condition, as expected from the previously known cases
\cite{BB} as well as the general theory \cite{Buerger}; see also the
indroduction of \cite{main} and references therein for more.

The mathematical setting was based on the lattice $\PP (S)$ of all
partitions of a finite set, say $S = \{ 1,2, \dots, n \}$, and was not
restricted to the biologically motivated case of two parents, but also
allowed for an arbitrary recombination scheme.  This results in an
interesting nonlinear ODE system in its own right, and adds a
nontrivial example of a solvable case. Nevertheless, the treatment in
\cite{main} did not consider other lattices and thus did not explore
the full structure of the solution in lattice-theoretic terms. It is
the aim of the present paper to continue this development, in
particular by employing the lattice of ordered or \emph{interval
  partitions}.  This has the advantage that complete closed
expressions can be derived for sizes up to four sites, and with some
effort even for five sites. This way, the intricate and rather complex
nature of the recursive approach becomes more transparent, and a
better understanding of the solution is possible. Also, we are able to
shed more light on the non-generic degenerate cases with singularities
in the recursion, which were only briefly looked at in \cite{main} and
had been excluded from consideration in all previous attempts
\cite{L,D}. Finally, motivated by the underlying partitioning process
from \cite[Sec.~6]{main} and its Kolmogorov backward and forward
equations, we derive a \emph{linear} ODE system for the solution of
the recombination ODE, thus explaining why the nonlinear ODE was
solvable in the first place and how the degeneracies emerge.
\smallskip

The paper is organised as follows. In Section~\ref{sec:prelim}, we
introduce the mathematical concepts from lattice theory that we need,
with some focus on our later applications (with one result of general
interest being added as an Appendix).  This is followed by a review
of some of the known results for the recombination ODE in
Section~\ref{sec:recall}, adapted to our more general setting.  This
section also includes some new results on the decay rates of the
system.  Then, we consider the ODE for the lattice $\II (S)$ of
interval partitions in some detail in Section~\ref{sec:int-part}. This
lattice allows to explore the general solution without the
combinatorial complexity of the full lattice $\PP (S)$, and leads to a
number of previously unknown properties. Special cases (with up to
five sites) are written out in explicit form (and with possible
applications in mind) in Sections~\ref{sec:examples} and
\ref{sec:five-sites}, before we sketch further extensions and
directions in Section~\ref{sec:outlook}, including the derivation
of a linear ODE system for the recombination process.

\section{Preliminaries}
\label{sec:prelim}

Let $S$ be a finite set, for instance $S = \{ 1,2,\dots , n\}$, and
let $\PP (S)$ denote the lattice of all partitions of $S$, with order
relation $\preccurlyeq$, so that $\cA \preccurlyeq \cB$ stands for
$\cA$ being a refinement of $\cB$.  Below, we use the shorthands
$\pmax = \{S \}$ and $\pmin = \big\{ \{1\}, \{2\}, \dots ,\{n\}\big\}$
for the maximal and the minimal element of $\PP (S)$. We use
$\cA\land\cB$ for the coarsest refinement and $\cA\lor\cB$ for the
finest coarsening of $\cA$ and $\cB$.
 
Let $\VV \subseteq \PP (S)$ denote a sublattice. It is called
\emph{decomposable} when there is a partition $S = S_1 \dot{\cup}\ts
S_2$ of $S$ into two non-empty subsets such that $\VV \subseteq \PP
(S_1) \times \PP (S_2)$. Moreover, $\VV$ is called \emph{reducible}
when a subset $U \nts \subset S$ exists such that $\VV$ is isomorphic
with a sublattice of $\PP (U)$. In this case, one may find a partition
$\{ A_1, A_2, \dots , A_r \} \in \PP (S)$, with $r < n$, such that
$\VV$ can be identified with a sublattice of $\PP \bigl(\{
1,2,\dots,r\} \bigr)$.

Clearly, we are primarily interested in lattices $\VV$ that are both
\emph{indecomposable} and \emph{irreducible}, which implies that
$\pmin, \pmax \in \VV$. The perhaps most important cases are $\PP (S)$
itself and $\II (S)$, the sublattice of \emph{ordered} or
\emph{interval partitions}. The latter are the partitions $\cA = \{
A_1, A_2, \dots , A_r \}$ such that all parts $A_i$ are contiguous
subsets of $S$, so
\[
     \cA \, = \, \big\{ \{1, \dots, k_1\},
      \{ k_1 + 1, \dots , k_2 \}, \dots ,
      \{ k_{r-1} + 1, \dots, n \} \big\} .
\]
Another interesting lattice consists of all non-crossing partitions.

Apart from our lattice $\VV$ for the set $S$, we will need the induced
lattices for all $\varnothing \ne U \nts \subseteq S$. They are defined
by
\[
      \VV (U) \, := \, \{ \cA|^{\pa}_{U}  \mid \cA \in \VV \} \ts ,
\]
where $\cA|^{\pa}_{U}$ denotes the restriction of $\cA$ to $U$. It
consists of all non-empty sets of the form $A_i \cap U$ as its
parts. With this notation, one has $\VV = \VV (S)$. We write $\lvert
\cA \rvert$ for the number of parts of a partition $\cA$, and $\supp
(\cA) := \bigcup_{i=1}^{\lvert \cA \rvert} A_{i}$ for the underlying
set that is partitioned.

In our setting, we also have a non-negative function
\[
       \varrho \! : \,  \PP (S) \longrightarrow \RR_{\geqslant 0} \ts ,
\]
whose values $\varrho (\cA)$ for $\cA \in \PP (S)$ have the meaning of
\emph{recombination rates} in the underlying continuous dynamical 
system we want to study.
Of particular relevance is the sublattice
\[
    \VV \, := \, \big\langle \cA \in \PP (S) \mid
    \varrho (\cA) > 0 \big\rangle \ts ,
\]
which is the smallest sublattice of $\PP (S)$ that contains all
partitions with strictly positive rates. To guarantee that $\VV$
is both irreducible and indecomposable, we will usually assume
that it contains all relevant partitions with two parts.

It will be vital to our later analysis that quantities defined on $\VV
(S)$ have consistent counterparts on all induced sublattices for
non-empty subsets $U\nts\subset S$. For the recombination rates, this
is done via
\begin{equation}\label{eq:rho-sub}
      \varrho^{U}\! (\cA) \, := 
     \!\sum_{\substack{\cB \in \VV (S) \\ \cB|^{\pa}_{U} = \cA}}
      \!\varrho^{S}\nts (\cB) \ts .
\end{equation}
This `top down' formula is consistent with the standard
marginalisation of probability vectors $q^{S}$ for the top system to
probability vectors $q^{U}$ for the subsystem defined by $U$; see
\cite[Sec.~5]{main} for a detailed account.

One quantity that requires a different approach is the \emph{decay
  rate} of a partition, which needs a `bottom up' recursion.  The
decay rates $\psi = \psi^{S}$ are defined recursively via those of the
subsystems for non-empty $U\nts \subseteq S$. One has
\cite{EB-ICM,main}
\begin{equation}\label{eq:psi-one}
     \psi^{U}\! (\pmax) \, = \sum_{\cA \ne \pmax} \varrho^{U}\! (\cA) \ts ,
\end{equation}
which implies $\psi^{U}\! (\pmax) = 0$ whenever $\lvert U
\rvert = 1$, together with the recursion
\begin{equation}\label{eq:psi-two}
    \psi^{U}\! (\cA) \, := \sum_{i=1}^{\lvert \cA \rvert}
     \psi^{A_i} (\pmax|^{\pa}_{A_{i}})
\end{equation}
for any partition $\cA = \{ A_1, A_2, \dots, A_{\lvert \cA \rvert} \}
\in \VV (U)$.  In particular, one always has 
$\psi^{U}\! (\pmin) = 0$. 

Functions such as $\varrho$ or $\psi$ can be viewed as elements of the
\emph{M\"{o}bius algebra} $\MM^{\pa}_{\RR} (\VV)$ for $\VV$ over the
field $\RR$. This is the free real vector space with basis $\{
\varepsilon^{\pa}_{\!\cA} \mid \cA \in \VV \}$, where
$\varepsilon^{\pa}_{\!\cA}$ is the function given by
$\varepsilon^{\pa}_{\!\cA} (\cB) := \delta^{\pa}_{\!\cA,\cB}$.  Then,
multiplication is defined by $ \varepsilon^{\pa}_{\!\cA} \cdot
\varepsilon^{\pa}_{\cB} = \varepsilon^{\pa}_{\!\cA} \,
\delta^{\pa}_{\!\cA,\cB}$; compare \cite[Sec.~IV.4.A]{Aigner} for
details.  More important to us is the real \emph{incidence algebra}
\cite[Ch.~IV.1]{Aigner}
\[
     \AAA^{\pa}_{\RR} (\VV) \, := \,
     \bigl\{ f \! : \, \VV \times \VV
     \longrightarrow \RR \mid
     f (\cA, \cB) = 0 \text{ whenever }
     \cA \not\preccurlyeq \cB \bigr\} .
\]
Here, addition and multiplication with a real number are defined
as usual, while multiplication $f * g$ is defined via the
(generally non-commutative) \emph{convolution}
\[
    \bigl( f * g \bigr) (\cA, \cB) \, = \!
    \sum_{\cA \preccurlyeq \udo{\cC}\preccurlyeq \cB}
    \! f(\cA,\cC) \, g (\cC,\cB) \ts ,
\]
where we follow \cite{Aigner} to mark the summation variable by
placing a dot underneath it.  Note that this gives $ \bigl( f * g
\bigr) (\cA, \cB) = 0$ for $\cA \not\preccurlyeq \cB$. Clearly,
$\delta (\cA,\cB) = \delta^{\pa}_{\!\cA,\cB}$ defines an element
$\delta\in\AAA^{\pa}_{\RR} (\VV)$ that is the neutral element for
multiplication, so $\delta\ts * f = f * \ts \delta = f$ for all $f\in
\AAA^{\pa}_{\RR} (\VV)$.

Recall that an element $f\in \AAA^{\pa}_{\RR} (\VV)$ is a \emph{unit}
(or invertible) when some $g\in \AAA^{\pa}_{\RR} (\VV)$ exists such
that $f*g = g * f = \delta$.  This happens if and only if $f (\cA,\cA)
\ne 0$ for all $\cA \in \VV$, and the inverse is then unique, hence
written as $f^{-1}$. The left inverse property gives $f^{-1} (\cA,\cA)
= 1/f(\cA,\cA)$ together with the recursion
\[
   f^{-1} (\cA,\cB) \, = \,
   \frac{-1}{f (\cB,\cB)}
    \sum_{\cA \preccurlyeq \udo{\cC}\prec \cB}
   f^{-1} (\cA,\cC) \, f(\cC,\cB)
\]
for $\cA\prec \cB$. A similar recursion follows from the right
inverse property. In the Appendix, we add a classic (but less
well-known) non-recursive formula \cite{SD}, which is sometimes of
advantage. An important unit is the \emph{zeta function} $\zeta$,
defined by $\zeta(\cA,\cB) = 1$ for $\cA\preccurlyeq \cB$ and by
$\zeta(\cA,\cB) = 0$ otherwise. Its inverse, $\mu = \zeta^{-1}$, is
the \emph{M\"{o}bius function} of the lattice, which plays a central
role in many (if not most) lattice-theoretic calculations; compare
\cite[Prop.~4.6]{Aigner}. It is given by $\mu (\cA,\cA ) =1$
for all $\cA\in\VV$ together with
\[
   \mu (\cA,\cB) \, = \, - \!
    \sum_{\cA \preccurlyeq \udo{\cC}\prec \cB} \!
   \mu (\cA,\cC)   \, =  \, - \!
    \sum_{\cA \prec \udo{\cC}\preccurlyeq \cB} \!
    \mu (\cC,\cB) 
\]
for $\cA \prec \cB$. It can often also be given in closed
form, which will be used below.

\section{Recombinaton equations}\label{sec:recall}
  
The ingredients for the formulation of the recombination ODE are a
finite set $S$ together with an irreducible and indecomposable lattice
$\VV$ of partitions of $S$, a general product space $X = \bigtim_{i\in
  S} \, X_{i}$ with all $X_{i}$ locally compact, the Banach space $\cM
(X)$ of finite Borel measures on $X$ with total variation norm $\|
. \|$, and the set of nonlinear \emph{recombinators} $R^{\pa}_{\!\cA}$
for $\cA\in\VV$, defined by
\[
       R^{\pa}_{\!\cA} (\omega) \, = \, 
       \frac{1}{\| \omega \|^{\lvert \cA \rvert -1}}
       \bigotimes_{i=1}^{\lvert \cA \rvert} 
       ( \pi^{\pa}_{\! A_{i}} .\ts \omega )
\]
for $\omega \in \cM (X)$. On the positive cone $\cM_{+} (X)$, the
recombinators satisfy $R^{\pa}_{\!\cA} R^{\pa}_{\cB} = R^{\pa}_{\!\cA
  \wedge \cB}$ for $\cA,\cB \in \VV$. Note that one can define a
matching order relation among them by saying that $R^{\pa}_{\!\cA}
\preccurlyeq R^{\pa}_{\cB}$ precisely when $R^{\pa}_{\!\cA}
R^{\pa}_{\cB} = R^{\pa}_{\!\cA}$. Consequently, one has
$R^{\pa}_{\!\cA} \preccurlyeq R^{\pa}_{\cB}$ if and only if
$\cA\preccurlyeq\cB$.

The recombination ODE now reads \cite{BB,MB,main}
\begin{equation}\label{eq:main-ODE}
     \dot{\omega}^{\pa}_{t} \, = \sum_{\cA\in\VV}
     \varrho (\cA) \, \bigl( R^{\pa}_{\!\cA} - \one \bigr)
       (\omega^{\pa}_{t})
\end{equation}
with non-negative numbers $\varrho (\cA) = \varrho^{S} (\cA)$ as
introduced above.  The forward flow preserves the positive cone of
$\cM (X)$ and, within it, also the norm of the initial condition,
$\omega^{\pa}_{0}$ say, see \cite{main} and references therein for
details. Since this is also the setting for the dynamics of
recombination, we restrict our attention to probability measures on
$X$ (denoted by $\cP (X)$) as initial conditions from now on.

The key to solve Eq.~\eqref{eq:main-ODE}, with $\omega^{\pa}_{0} \in
\cP (X)$, is the decomposition
\begin{equation}\label{eq:ansatz}
   \omega^{\pa}_{t} \, = \sum_{\cA \in \VV} 
   a^{\pa}_{t} (\cA) \, R^{\pa}_{\!\cA} (\omega^{\pa}_{0}) \ts ,
\end{equation}
which effectively separates the dependence on the initial condition
from the time evolution.  Here, the coefficients $a^{\pa}_{t}$
constitute a family of probability vectors on $\VV$ with initial
condition $a^{\pa}_{0} (\cA) = \delta (\cA,\pmax)$.  As a result of
\cite[Lemma 2 and Prop.~7]{main}, the induced ODE for the coefficients
then reads
\begin{equation}\label{eq:a-ODE}
    \dot{a}^{\pa}_{t} (\cA) \, = \, -\psi (\pmax) \, a^{\pa}_{t} (\cA)
    \; + \!\sum_{\cA\preccurlyeq\udo{\cB}\prec \pmax}\!
    \varrho (\cB)\, \prod_{i=1}^{\lvert \cB \rvert}
    a^{B_{i}}_{t} (\cA |^{\pa}_{B_{i}}) \ts ,
\end{equation}
where $a^{U}_{t}$ is the marginalised probability vector on the
sublattice $\VV (U)$. Note that this vector has a clear meaning in
terms of a (stochastic) partitioning process, as explained in detail
in \cite[Sec.~6]{main}.  Each $a^{U}_{t}$ satisfies an ODE of the same
type as in Eq.~\eqref{eq:a-ODE} for the subsystem defined by
$\varnothing\ne \nts U \subseteq S$; compare \cite[Sec.~5]{main} for
details on the important marginalisation consistency of our
equations. A step-by-step repetition of the results from
\cite{BB,MB,main} in our more general lattice-theoretic setting now
leads to the following fundamental result.

\begin{theorem}\label{thm:main}
  The Cauchy problem defined by the ODE \eqref{eq:main-ODE} with
  initial condition\/ $\omega^{\pa}_{0} \in \cM (X)$ has a unique
  solution.  The corresponding flow in forward time preserves the
  space\/ $\cP (X)$ of probability measures on\/ $X$. For any\/
  $\omega^{\pa}_{0} \in \cP (X)$, the solution is of the form
  \eqref{eq:ansatz}, where the coefficients\/ $a^{\pa}_{t}$ constitute
  probability vectors on\/ $\VV$ that are the unique solution of the
  nonlinear ODE system \eqref{eq:a-ODE} with initial condition\/
  $a^{\pa}_{0} (\cA) = \delta (\cA, \pmax)$. \qed
\end{theorem}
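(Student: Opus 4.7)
The plan is to follow the standard three-step programme for such measure-valued nonlinear ODEs, adapted from \cite{BB,MB,main} to the present lattice-theoretic setting. First, I would establish local existence and uniqueness of the Cauchy problem via Picard--Lindel\"of in the Banach space $\cM (X)$. The right-hand side of \eqref{eq:main-ODE} is a finite sum of terms, each of which is Lipschitz continuous on every norm-bounded subset of $\cM (X)$, because the recombinators $R^{\pa}_{\!\cA}$ are bounded and, on any norm-bounded set, Lipschitz (the nonlinearity enters only through the scalar prefactor $\| \omega \|^{-(\lvert \cA \rvert -1)}$, which is smooth away from the origin). This yields a unique local-in-time solution for any $\omega^{\pa}_{0} \in \cM (X)$.

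Second, I would verify that $\cP (X)$ is invariant under the forward flow. Norm preservation on $\cM_{+} (X)$ is immediate from the normalisation built into $R^{\pa}_{\!\cA}$, since along any trajectory starting in $\cM_{+} (X)$ one has $\frac{\mathrm{d}}{\mathrm{d}t} \| \omega^{\pa}_{t} \| = \sum_{\cA} \varrho (\cA) \bigl( \| R^{\pa}_{\!\cA} (\omega^{\pa}_{t}) \| - \| \omega^{\pa}_{t} \| \bigr) = 0$. Positivity preservation is then obtained by rewriting \eqref{eq:main-ODE} as $\dot{\omega}^{\pa}_{t} = -\psi (\pmax)\,\omega^{\pa}_{t} + \sum_{\cA} \varrho (\cA)\, R^{\pa}_{\!\cA} (\omega^{\pa}_{t})$ and applying a variation-of-constants argument: the gain term $\sum_{\cA} \varrho (\cA)\, R^{\pa}_{\!\cA} (\omega^{\pa}_{t})$ stays in $\cM_{+} (X)$ whenever $\omega^{\pa}_{t}$ does, so an exit from $\cM_{+} (X)$ is impossible. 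Together with the conserved norm, this extends the local solution to a global one and confirms forward invariance of $\cP (X)$.

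Third, and this is the technical heart, I would verify the ansatz \eqref{eq:ansatz}. Plugging it into \eqref{eq:main-ODE} requires evaluating $R^{\pa}_{\!\cA}\bigl( \sum_{\cB} a^{\pa}_{t}(\cB)\, R^{\pa}_{\cB}(\omega^{\pa}_{0}) \bigr)$. Using the product-of-marginals form of $R^{\pa}_{\!\cA}$ and the fact that $\pi^{\pa}_{\! A_{i}} R^{\pa}_{\cB} (\omega^{\pa}_{0})$ depends only on the restriction $\cB|^{\pa}_{A_{i}}$, one can regroup the sums according to these restrictions and recognise the marginalised coefficients $a^{A_{i}}_{t}$ on each tensor factor. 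Expanding the resulting tensor product back into the basis $\{ R^{\pa}_{\!\cC} (\omega^{\pa}_{0}) \mid \cC \in \VV \}$ by means of the semigroup identity $R^{\pa}_{\!\cA} R^{\pa}_{\cB} = R^{\pa}_{\!\cA \wedge \cB}$ and comparing coefficients with the left-hand side $\sum_{\cA} \dot{a}^{\pa}_{t}(\cA) R^{\pa}_{\!\cA} (\omega^{\pa}_{0})$ produces exactly the scalar system \eqref{eq:a-ODE}. The initial condition $a^{\pa}_{0}(\cA) = \delta (\cA, \pmax)$ is forced by $R^{\pa}_{\!\pmax} (\omega^{\pa}_{0}) = \omega^{\pa}_{0}$, and uniqueness of the flow on the simplex of probability vectors on $\VV$ follows from the Lipschitz character of the polynomial right-hand side of \eqref{eq:a-ODE} on that compact set.

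The main obstacle will be the coefficient-matching in step three: one has to ensure that the identification with \eqref{eq:a-ODE} is independent of any degeneracies among the measures $R^{\pa}_{\!\cA} (\omega^{\pa}_{0})$, which is most safely done by taking the ansatz as a \emph{definition} on the level of the scalar coefficients and then verifying \emph{a posteriori} that the measure-valued expression \eqref{eq:ansatz} really solves \eqref{eq:main-ODE}, with uniqueness inherited from step one. This requires a careful bookkeeping of the refinement sums and the marginalisation consistency \eqref{eq:rho-sub}; it is essentially the content of \cite[Lemma 2 and Prop.~7]{main} translated from $\PP (S)$ to an arbitrary indecomposable, irreducible sublattice $\VV$. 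One has to check that the arguments there use only lattice-theoretic data (meets, induced sublattices $\VV (U)$, the recombinator semigroup identity, and marginal consistency), all of which transfer verbatim to our more general setting.
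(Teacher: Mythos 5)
Your proposal is correct and takes essentially the same route as the paper, which offers no written proof at all but simply asserts that a ``step-by-step repetition'' of the arguments from \cite{BB,MB,main} carries over to the general lattice $\VV$; your three-step programme (local Picard--Lindel\"of, norm and positivity preservation on $\cM_{+}(X)$, then the coefficient identification of \cite[Lemma 2 and Prop.~7]{main} with the ansatz taken as a definition to sidestep possible linear dependence of the $R^{\pa}_{\!\cA}(\omega^{\pa}_{0})$) is precisely the content of that repetition.
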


The main progress of \cite{main} in comparison to previous work is the
insight that, for generic values of the recombination rates $\varrho$,
an explicit recursive formula for the solution can be derived. In our
present setting, it can be formulated as follows. Let $\psi^{U}\! $ be
the rate functions defined above, and set
\begin{equation}\label{eq:theta-sum}
     a^{U}_{t} \! (\cA) \, := \sum_{\cB \succcurlyeq \cA} 
    \theta^{U}\! (\cA,\cB) \, \ee^{-\psi^{U} \! (\cB) \ts t}
\end{equation}
for $\cA \in \VV (U)$ and all non-empty $U\nts \subseteq S$, where
$\theta^{U}\!$ is an element of the incidence algebra
$\AAA^{\pa}_{\RR} (\VV (U))$. Now, let $\theta^{U} \! (\pmax,\pmax) =
1$ together with
\begin{equation}\label{eq:theta-init}
    \theta^{U} \! (\cA, \pmax) \, = \, - \!
    \sum_{\cA\preccurlyeq\udo{\cB}\prec\pmax}
    \!\theta^{U} \! (\cA,\cB)
\end{equation}
for any $\cA \prec \pmax$, which is just a reflection of the initial
conditions we need. Inserting Eq.~\eqref{eq:theta-sum} into the ODE
\eqref{eq:a-ODE} and observing the marginalisation consistency then
leads to the recursion
\begin{equation}\label{eq:rec}
   \theta^{U} \! (\cA,\cB) \, = \sum_{\cB\preccurlyeq\udo{\cC}\prec\pmax}
   \frac{\varrho^{U} \! (\cC)}{\psi^{U} \! (\pmax) - \psi^{U} \! (\cB)}
   \, \prod_{i=1}^{\lvert \cC \rvert} \theta^{C_{i}}
   (\cA |^{\pa}_{C_{i}}, \cB |^{\pa}_{C_{i}})
\end{equation}
for all $\cA\preccurlyeq\cB\prec \pmax$, provided that no denominator
vanishes and that the exponential functions in Eq.~\eqref{eq:theta-sum}
are linearly independent. The entire analysis from \cite{main} for the
lattice $\PP (S)$ remains valid for our irreducible and indecomposable
lattice $\VV$, and leads to the following result.

\begin{theorem}\label{thm:theta-rec}
  Let\/ $S$ be a finite set, and $\VV=\VV (S)$ an irreducible,
  indecomposable lattice of partitions of\/ $S$. Assume that the decay
  rates\/ $\psi^{U}\! (\cA)$ are recursively defined for\/
  $\varnothing\ne U\nts\subseteq S$ according to
  Eqs.~\eqref{eq:psi-one} and\/ \eqref{eq:psi-two}, with the induced
  recombination rates\/ $\varrho^{U}\!$ on subsystems being given by
  Eq.~\eqref{eq:rho-sub}. Assume further that the decay rates\/
  $\psi^{S}\nts (\cA)$ with\/ $\cA\in\VV$ are distinct.
   
  Then, the exponential ansatz \eqref{eq:theta-sum} solves the Cauchy
  problem of Eq.~\eqref{eq:a-ODE} with initial condition\/ $a^{U}_{t}
  \! (\cA) = \delta (\cA,\pmax)$ for all\/ $\varnothing\ne U\nts
  \subseteq S$ if and only if the coefficients\/ $\theta^{U}\! $ are
  recursively determined by Eq.~\eqref{eq:rec} together with\/
  $\theta^{U} \! (\pmax,\pmax) = 1$ and\/ Eq.~\eqref{eq:theta-init}.
\end{theorem}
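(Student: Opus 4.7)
The plan is induction on $|U|$, the base $|U|=1$ being immediate since then $\VV(U)=\{\pmax\}$ and $a^{U}_{t}(\pmax)\equiv 1 = \theta^{U}(\pmax,\pmax)$. For the inductive step, I would substitute the ansatz \eqref{eq:theta-sum} into both sides of \eqref{eq:a-ODE} and match coefficients of the exponentials $\ee^{-\psi^{U}(\cC)t}$. Differentiating yields
\[
   \dot{a}^{U}_{t}(\cA) \, = \, - \!\sum_{\cC\succcurlyeq\cA}\psi^{U}(\cC)\,\theta^{U}(\cA,\cC)\,\ee^{-\psi^{U}(\cC)t},
\]
while the linear part of the right-hand side contributes $-\psi^{U}(\pmax)\sum_{\cC}\theta^{U}(\cA,\cC)\ee^{-\psi^{U}(\cC)t}$. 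In the nonlinear sum only terms with $\cB\prec\pmax$ occur, so every block $B_{i}$ is a proper subset of $U$ and the inductive hypothesis supplies each factor $a^{B_{i}}_{t}(\cA|^{\pa}_{B_{i}})$ already in the form \eqref{eq:theta-sum} with coefficients $\theta^{B_{i}}$.

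Expanding the product produces sums indexed by tuples $(\cD_{1},\dots,\cD_{|\cB|})$ with $\cD_{i}\succcurlyeq\cA|^{\pa}_{B_{i}}$, which are in bijection with partitions $\cC\in\VV(U)$ satisfying $\cA\preccurlyeq\cC\preccurlyeq\cB$ via $\cC|^{\pa}_{B_{i}}=\cD_{i}$. The pivotal identity is
\[
  \sum_{i=1}^{|\cB|}\psi^{B_{i}}(\cC|^{\pa}_{B_{i}}) \, = \, \psi^{U}(\cC),
\]
obtained by iterating \eqref{eq:psi-two}: the parts of $\cC|^{\pa}_{B_{i}}$ are exactly the parts of $\cC$ contained in $B_{i}$, so the double sum collapses to the defining sum for $\psi^{U}(\cC)$. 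Using this, the nonlinear contribution reassembles as a single sum over $\cC\succcurlyeq\cA$ of $\ee^{-\psi^{U}(\cC)t}$ times $\sum_{\cC\preccurlyeq\udo{\cB}\prec\pmax}\varrho^{U}(\cB)\prod_{i}\theta^{B_{i}}(\cA|^{\pa}_{B_{i}},\cC|^{\pa}_{B_{i}})$. By the distinctness hypothesis the relevant exponentials are linearly independent, so the ODE is equivalent to the coefficient-wise equations
\[
  \bigl(\psi^{U}(\pmax)-\psi^{U}(\cC)\bigr)\theta^{U}(\cA,\cC) \, = \!\!\sum_{\cC\preccurlyeq\udo{\cB}\prec\pmax}\!\!\varrho^{U}(\cB)\prod_{i=1}^{|\cB|}\theta^{B_{i}}(\cA|^{\pa}_{B_{i}},\cC|^{\pa}_{B_{i}})
\]
for every $\cC\succcurlyeq\cA$. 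For $\cC\prec\pmax$ the denominator is nonzero and one recovers \eqref{eq:rec}; for $\cC=\pmax$ both sides vanish tautologically, so $\theta^{U}(\cA,\pmax)$ is left free by the ODE and instead fixed by evaluating \eqref{eq:theta-sum} at $t=0$ against $a^{U}_{0}(\cA)=\delta(\cA,\pmax)$. This yields $\sum_{\cB\succcurlyeq\cA}\theta^{U}(\cA,\cB)=\delta(\cA,\pmax)$, which gives $\theta^{U}(\pmax,\pmax)=1$ together with \eqref{eq:theta-init}. Both directions of the ``if and only if'' fall out of this single equivalence.

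The main obstacle is the linear-independence step, since the theorem only assumes distinctness of $\psi^{S}(\cA)$ on the top lattice $\VV$, whereas the induction implicitly needs analogous distinctness on each induced sublattice $\VV(U)$. I would address this either by noting that, through \eqref{eq:psi-two}, any coincidence of values on $\VV(U)$ would propagate to a coincidence on $\VV$ and contradict the hypothesis, or by restricting the ansatz on each level to the actually distinct exponents and arguing that the recursion \eqref{eq:rec} is only applied where the denominator is nonzero. The second piece of bookkeeping, the bijection $(\cD_{1},\dots,\cD_{|\cB|})\leftrightarrow\cC$ used to regroup the nonlinear term, is routine but must be stated carefully so that the ranges of summation match on both sides; both issues are direct extensions of the treatment in \cite{main} for the special case $\VV=\PP(S)$.
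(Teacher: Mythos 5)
Your proposal is correct and follows essentially the same route as the paper, whose proof is itself only a sketch deferring the coefficient-comparison argument to the treatment of $\PP(S)$ in \cite{main}: induction over subsystems, substitution of the exponential ansatz, regrouping the product via the correspondence between tuples $(\cD_{1},\dots,\cD_{|\cB|})$ and partitions $\cC$ with $\cA\preccurlyeq\cC\preccurlyeq\cB$, the additivity $\sum_{i}\psi^{B_{i}}(\cC|^{\pa}_{B_{i}})=\psi^{U}(\cC)$ from Eq.~\eqref{eq:psi-two}, and fixing $\theta^{U}(\cA,\pmax)$ from the initial condition since the $\cC=\pmax$ coefficient equation is tautological. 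The one point you flag as an obstacle --- distinctness of the $\psi^{U}$ on the induced sublattices --- is resolved in the paper exactly along the lines of your first suggestion, by appealing to \cite[Lemma~5]{main}, which propagates a coincidence on $\VV(U)$ up to a coincidence on $\VV$.
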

\begin{proof}[Sketch of proof]
  Once again, this is a step-by-step transfer of the arguments for the
  lattice $\PP (S)$ from \cite{main} to $\VV \subset \PP
  (S)$. Irreducibility and indecomposability of $\VV$ ensure that the
  latter contains $\pmin$ and $\pmax$, which also implies the
  corresponding statement for $\VV (U)$ and all $U$ under
  consideration.

  In particular, given $\varnothing\ne U \nts \subseteq S$, the
  distinctness of all $\psi^{U} \! (\cB)$ with $\cB \in \VV (U)$
  follows from the assumed distinctness of the $\psi^{S}\nts (\cA)$
  with $\cA\in\VV$ by an application of \cite[Lemma~5]{main}, which
  holds in the generality we need here.  This allows to use the ansatz
  \eqref{eq:theta-sum} together with a comparison of coefficients to
  conclude as claimed, where all appearing denominators are non-zero
  as a consequence of the distinctness of the decay rates on all
  levels.
\end{proof}

The condition on the distinctness of the decay rates $\psi^{S}\nts
(\cA)$ with $\cA\in\VV$ covers the generic case (both topologically
and measure-theoretically). When looking at the excluded cases, one
realises that some degeneracies are `harmless' in the sense that the
solution formula \eqref{eq:theta-sum} remains valid (which also means
that the selected exponential functions still suffice), while there
are also `bad' degeneracies that lead to singularities in the
recursion and render this approach incomplete (due to the necessity of
further functions beyond simple exponentials; see \cite[Sec.~9 and
Appendix]{main}).  Below, we shall discuss this phenomenon in detail
for $\VV=\II (S)$, and later derive an alternative explanation in
terms of Jordan matrices for linear ODE systems. At this stage, we
recall \cite[Cor.~6]{main} and state it for our more general situation
as follows.
\begin{coro}\label{coro:extend}
  Let\/ $S$ and\/ $\VV$ be as in Theorem~$\ref{thm:theta-rec}$. Then,
  the generic recursive solution extends to all recombination rates\/
  $\varrho^{S}\nts (\cA)$, with\/ $\cA\in \VV$, such that\/ $\psi^{U} \!
  (\pmax) \ne \psi^{U} \! (\cB)$ holds for all\/ $\varnothing\ne U\nts
  \subseteq S$ and all\/ $\cB\in\VV (U)$.  \qed
\end{coro}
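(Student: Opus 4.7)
The plan is to revisit the proof of Theorem~\ref{thm:theta-rec} and pinpoint exactly where full distinctness of the decay rates is used, separating this from where only non-vanishing of the specific differences $\psi^{U}(\pmax)-\psi^{U}(\cB)$ is actually needed. Under the weaker hypothesis of the corollary, the recursion \eqref{eq:rec}, together with \eqref{eq:theta-init} and the base case $\theta^{U}(\pmax,\pmax)=1$, is still well-defined: the only denominators appearing are $\psi^{U}(\pmax)-\psi^{U}(\cB)$ with $\cB\prec\pmax$, which the hypothesis keeps away from zero. Hence $\theta^{U}(\cA,\cB)$ is still unambiguously determined for every non-empty $U\subseteq S$, and what remains is to verify that the ansatz \eqref{eq:theta-sum} continues to solve the system \eqref{eq:a-ODE}.

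Substituting \eqref{eq:theta-sum} into \eqref{eq:a-ODE} and proceeding by induction on $\lvert U\rvert$, I would expand the products $\prod_{i} a^{B_{i}}_{t}(\cA|^{\pa}_{B_{i}})$ via the inductive hypothesis, using the additivity $\sum_{i}\psi^{B_{i}}(\cD_{i})=\psi^{U}(\cC)$ implied by \eqref{eq:psi-two}, where $\cC$ is the partition of $U$ obtained by gluing the $\cD_{i}\in\VV(B_{i})$. Collecting terms by the exponential $\ee^{-\psi^{U}(\cC)t}$ reduces the ODE to the identity
\[
    \bigl(\psi^{U}(\pmax)-\psi^{U}(\cC)\bigr)\,\theta^{U}(\cA,\cC) \, = \!
    \sum_{\cC\preccurlyeq\cB\prec\pmax}\!\varrho^{U}(\cB)\prod_{i=1}^{\lvert\cB\rvert}
    \theta^{B_{i}}(\cA|^{\pa}_{B_{i}},\cC|^{\pa}_{B_{i}}) ,
\]
for each $\cC\succcurlyeq\cA$ --- provided the exponentials $\ee^{-\psi^{U}(\cC)t}$ for distinct $\cC$ are linearly independent. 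Without that assumption, only the sums over each equivalence class $\{\cC:\psi^{U}(\cC)=\lambda\}$ need to agree. The recursion \eqref{eq:rec} nevertheless enforces the displayed identity termwise whenever $\lambda\ne\psi^{U}(\pmax)$, so \emph{a fortiori} the grouped sums coincide. For the borderline class $\lambda=\psi^{U}(\pmax)$, the hypothesis forces $\cC=\pmax$ to be the unique representative, and both sides vanish trivially: the prefactor on the left, and the empty sum over $\pmax\preccurlyeq\cB\prec\pmax$ on the right. Uniqueness from Theorem~\ref{thm:main} then identifies the resulting ansatz with the actual solution of \eqref{eq:main-ODE}.

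The only delicate point is this borderline class, which the hypothesis precisely accommodates; beyond that, the argument is routine bookkeeping. A conceptually cleaner but less constructive alternative would be a continuity/density argument: perturb $\bs{\varrho}$ inside the open domain carved out by the hypothesis to reach the full-distinctness stratum where Theorem~\ref{thm:theta-rec} applies, and then pass to the limit using analytic dependence of both $a^{U}_{t}$ and of the recursive $\theta^{U}$ on $\bs{\varrho}$. The direct substitution above sidesteps having to establish this density and therefore seems preferable for a self-contained corollary.
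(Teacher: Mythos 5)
Your argument is correct, and it is more than the paper itself offers: the corollary is stated with no proof, being recalled from \cite[Cor.~6]{main}, so there is nothing in the text to match it against line by line. Your direct substitution isolates exactly the right point. The hypothesis $\psi^{U}\!(\pmax)\ne\psi^{U}\!(\cB)$ for all levels $U$ is precisely what keeps every denominator in \eqref{eq:rec} nonzero (note that one genuinely needs the condition imposed on \emph{every} non-empty $U$, since the argument via \cite[Lemma~5]{main} that deduces subsystem distinctness from top-level distinctness is no longer available); and the full distinctness assumption of Theorem~\ref{thm:theta-rec} enters the verification only through the linear independence of the exponentials, which is needed to \emph{force} the termwise identities $\bigl(\psi^{U}\!(\pmax)-\psi^{U}\!(\cC)\bigr)\ts\theta^{U}\!(\cA,\cC)=\sum_{\cC\preccurlyeq\cB\prec\pmax}\varrho^{U}\!(\cB)\prod_{i}\theta^{B_{i}}(\cA|^{\pa}_{B_{i}},\cC|^{\pa}_{B_{i}})$ but not to \emph{use} them: since the recursion supplies these identities term by term, any coincidences among the $\psi^{U}\!(\cC)$ with $\cC\prec\pmax$ only cause harmless regrouping, and the class $\lambda=\psi^{U}\!(\pmax)$ is reduced to the single representative $\pmax$ by hypothesis, where both sides vanish. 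Together with \eqref{eq:theta-init} for the initial condition and uniqueness from Theorem~\ref{thm:main}, this closes the argument. The continuity/density alternative you sketch is closer in spirit to how such extensions are usually justified in \cite{main}, but it requires establishing that the generic stratum is dense in the open set cut out by the corollary's hypothesis and that both sides depend continuously on $\varrho$ there; your self-contained verification avoids that overhead and is the cleaner route here.
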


In \cite{main}, we also defined a linear reference system for $\VV =
\PP (S)$. The definition extends to any lattice $\VV$ that contains
$\pmax$ as
\begin{equation}\label{eq:chi-sum}
   \chi(\cA) \, = \!\sum_{\cB \not\in [\cA,\pmax]} \!\!  \varrho (\cB) 
   \quad \text{with} \quad
   [\cA,\pmax] := \{ \cC \in \VV \mid \cA \preccurlyeq 
        \cC \preccurlyeq \pmax \} \ts ,
\end{equation}
where it is understood that we sum over the partitions of $\VV$,
subject to the condition specified, and analogously for the subsystems
defined by the non-empty subsets $U\nts \subseteq S$. In particular,
one always has $\chi (\pmax) = \psi (\pmax)$.  Then, a simple
calculation leads to the following result.
\begin{fact}\label{fact:chi-diff}
  If\/ $\pmax \in \VV$, the decay rates\/ $\chi$ of
  Eq.~\eqref{eq:chi-sum} satisfy the relation
\[ 
    \chi (\pmax) - \chi (\cA) \; = 
    \sum_{\cA \preccurlyeq \udo{\cB} \prec \pmax}
    \varrho (\cB) \ts .
\]
Moreover, if\/ $\VV = \II (S)$ and if\/ $\varrho (\cA) > 0$ holds for
all\/ $\cA$ with two parts, one has
\[
     \chi (\pmax) - \chi (\cA) \, > \, 0
\]  
  for all\/ interval partitions\/ $\cA \ne \pmax$.   \qed
\end{fact}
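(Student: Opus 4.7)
The plan is to treat the two assertions in turn, both via elementary bookkeeping based on the definition
\[
   \chi(\cA) \, = \!\sum_{\cB\in\VV,\, \cB\not\in[\cA,\pmax]}\! \varrho(\cB) \ts .
\]
Since every $\cB\in\VV$ satisfies $\cB\preccurlyeq\pmax$, the interval $[\cA,\pmax]$ reduces to $\{\cC\in\VV : \cA\preccurlyeq\cC\}$. Consequently, $\chi(\cA)$ is the sum of $\varrho(\cB)$ over those $\cB\in\VV$ with $\cA\not\preccurlyeq\cB$, while $\chi(\pmax)$ is simply the sum over all $\cB\ne\pmax$.

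For the first identity, I would split the index set $\{\cB\in\VV : \cB\ne\pmax\}$ into the two disjoint pieces according to whether $\cA\preccurlyeq\cB$ or not, namely
\[
    \{\cB : \cA\preccurlyeq\cB\prec\pmax\} \;\dot{\cup}\;
    \{\cB : \cA\not\preccurlyeq\cB\} \ts .
\]
Summing $\varrho$ over both parts and subtracting $\chi(\cA)$ leaves exactly $\sum_{\cA\preccurlyeq\cB\prec\pmax}\varrho(\cB)$, as claimed.

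For the strict positivity on $\II(S)$, the key observation is that any interval partition $\cA=\{A_1,\dots,A_r\}$ with $\cA\ne\pmax$ has $r\geqslant 2$, and the merger
\[
    \cB \, = \, \big\{ A_1, \, A_2\cup\dots\cup A_r \big\}
\]
is itself an interval partition, since the contiguity of the $A_i$ ensures that $A_2\cup\dots\cup A_r$ is again a contiguous block of $S$. By construction, $\cA\preccurlyeq\cB\prec\pmax$, and $\cB$ has exactly two parts, so $\varrho(\cB)>0$ by assumption. Combined with the identity just proved (which gives a sum of non-negative terms), this yields $\chi(\pmax)-\chi(\cA)\geqslant\varrho(\cB)>0$.

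There is no substantial obstacle here; the only lattice-theoretic input beyond definitions is the closure property that $\II(S)$ contains this particular two-part coarsening of any nontrivial interval partition, which is immediate from the interval structure. The fact does not require indecomposability of $\VV$ for its first part, and for its second part the hypothesis on the two-part rates is obviously essential, as can be seen by noting that the refinement chain from $\cA$ up to $\pmax$ in $\II(S)$ must pass through some two-part interval partition $\cB$, contributing $\varrho(\cB)$ to the sum.
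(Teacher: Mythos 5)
Your proof is correct and matches the ``simple calculation'' that the paper leaves implicit (the Fact is stated with no written proof): the first identity follows from splitting $\{\cB\ne\pmax\}$ according to whether $\cA\preccurlyeq\cB$, and the positivity from exhibiting a two-part interval coarsening $\cB$ with $\cA\preccurlyeq\cB\prec\pmax$. Nothing is missing.
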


Whenever $\lvert U \rvert \leqslant 3$, we had $\psi^{U}\! =
\chi^{U}$, and one has $\psi^{U}\! (\pmax) = \chi^{U}\! (\pmax)$ for
any $U\nts \ne \varnothing$, while no general relation was derived in
\cite{main}. To formulate one, we say that a partition $\cB$
\emph{splits} a set $V \subseteq U$ when the restriction $\cB
|^{\pa}_{V}$ has at least two parts. It is then clear what it means to
say that $\cB$ splits one or more parts of another partition $\cA$.
Note that $\cB$ can split a part of $\cA$ without satisfying $\cB
\preccurlyeq \cA$.

This notation allows us to rewrite Eq.~\eqref{eq:chi-sum}
for $\VV = \VV (S)$ as 
\[
   \chi (\cA) \; = \sum_{\cB\not\in [\cA,\pmax]} \! \varrho (\cB) \, =
     \sum_{\substack{\cB \text{ splits at least} \\ \text{one part of } \cA}} 
     \!\! \varrho (\cB) \; = \, \sum_{n\geqslant 1} \,
     \sum_{\substack{\cB \text{ splits $n$} \\ \text{parts of } \cA}}
     \! \varrho (\cB) \ts ,
\]
where all partition sums run over $\VV$. The analogous relation also
holds for all $\VV (U)$ with $\varnothing \ne U\nts \subset
S$. Observe that the last identity implies the relation
\[
    \chi^{U}\! (\pmax) \, = \! \sum_{\pmax \ne \udo{\cA} \in \VV (U)} 
    \! \varrho^{U}\! (\cA)
    \, = \! \sum_{\pmax \ne \udo{\cA} \in \VV (U)} 
     \sum_{\substack{\cB \in \VV  \\ \cB |^{\pa}_{U} = \cA}}
    \varrho (\cB) \, = 
    \sum_{\substack{\cB \in \VV  \\ \cB \text{ splits } U}}
     \varrho (\cB) \ts .
\]
Now, we can rewrite $\psi$ as
\begin{align}
   \psi (\cA) \; & = \,\sum_{i=1}^{\lvert \cA \rvert} 
      \psi^{A_i} (\pmax|^{\pa}_{A_{i}}) \; =
      \, \sum_{i=1}^{\lvert \cA \rvert} 
      \chi^{A_i} (\pmax|^{\pa}_{A_{i}}) \; =
      \, \sum_{i=1}^{\lvert \cA \rvert} \,
      \sum_{\substack{\cB \text{ splits} \\ \text{at least } A_i }}
      \! \varrho (\cB) \nonumber \\
      &= \, \sum_{i=1}^{\lvert \cA \rvert} \,\biggl( \,
      \sum_{\substack{\cB \text{ splits} \\ 
      \text{only } A_i}} \!\! \varrho (\cB)
      \;\; + \sum_{\substack{\cB \text{ splits } A_i \text{ and} \\ 
            \text{$A_j$ for one $j\ne i$} }} \! \! 
            \varrho (\cB)   \;\; +
            \sum_{\substack{\cB \text{ splits $3$ parts of} \\ 
                \cA , \text{ including } A_i}} \! \! \varrho (\cB)
            \;\; + \; \dots \, \biggr) \nonumber \\[2mm]
      & = \,  \sum_{n \geqslant 1} \; n \!\!
      \sum_{\substack{\cB \text{ splits $n$} \\ \text{parts of } \cA}}
          \! \! \varrho (\cB) \; = \, \sum_{\cB\in\VV} 
          \# (\cA,\cB) \, \varrho (\cB) \ts ,  \label{eq:psi-sum} 
\end{align}
where the second to last step follows from a simple counting argument
and $\# (\cA,\cB)$ is the number of parts of $\cA$ that are split by
$\cB$.  If we now compare the two calculations and recall that
$\varrho \geqslant 0$, the following result is immediate.

\begin{lemma}\label{lem:psi-chi-gen}
    Within a given lattice\/ $\VV$, the decay rates\/ $\psi$
    and\/ $\chi$ satisfy the relation
\[
     \psi (\cA) \, = \, \chi (\cA) \, + \, \sum_{n\geqslant 2} \,
     (n-1) \! \! \sum_{\substack{\cB \text{ \rm splits } n \\ 
        \text{\rm parts of } \cA}}
          \! \! \varrho (\cB) \ts .
\]    
In particular, one has\/ $\psi (\cA) \geqslant \chi (\cA)$ for all\/
$\cA \in \VV$.  \qed
\end{lemma}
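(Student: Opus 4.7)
The plan is to read off the identity directly from the two expressions derived in the paragraphs immediately preceding the lemma: the sum decomposition of $\chi$ in terms of partitions $\cB$ that split a prescribed number of parts of $\cA$, and the analogous sum decomposition of $\psi$ given in \eqref{eq:psi-sum}. Both are stratified by the number $n$ of parts of $\cA$ that a given $\cB\in\VV$ splits, so subtracting them termwise is the natural move.

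More concretely, first I would rewrite
\[
   \chi (\cA) \, = \sum_{\substack{\cB \text{ splits at least} \\
   \text{one part of } \cA}} \!\! \varrho (\cB)
   \, = \sum_{n\geqslant 1} \, \sum_{\substack{\cB \text{ splits $n$} \\
   \text{parts of } \cA}} \varrho (\cB) \ts ,
\]
which is the identity already established in the displayed line just before \eqref{eq:psi-sum}, based on the equivalence $\cB\notin [\cA,\pmax]$ iff $\cB$ splits at least one part of $\cA$. Next I would invoke \eqref{eq:psi-sum} in the form
\[
    \psi (\cA) \, = \, \sum_{n \geqslant 1} \, n
    \sum_{\substack{\cB \text{ splits $n$} \\ \text{parts of } \cA}}
    \! \varrho (\cB) \ts .
\]
Subtracting these two expressions immediately yields
\[
    \psi (\cA) - \chi (\cA) \, = \, \sum_{n \geqslant 1} (n-1)
    \sum_{\substack{\cB \text{ splits $n$} \\ \text{parts of } \cA}}
    \! \varrho (\cB) \ts ,
\]
and the $n=1$ summand vanishes because of the prefactor $n-1$, so the range of the outer sum may be truncated to $n\geqslant 2$. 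This is the claimed identity.

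The second assertion, $\psi (\cA) \geqslant \chi (\cA)$, is then immediate: every term in the correction sum has the nonnegative prefactor $n-1$ and carries only rates $\varrho (\cB)\geqslant 0$, so the entire correction is nonnegative. There is no real obstacle here; the only small subtlety is the bookkeeping of the counting argument behind \eqref{eq:psi-sum} (namely, that summing over $i$ the partitions $\cB$ splitting $A_i$ counts each $\cB$ exactly $\# (\cA,\cB)$ times), but this has already been handled in the derivation of \eqref{eq:psi-sum} and can simply be cited.
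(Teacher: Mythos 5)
Your proposal is correct and matches the paper's own argument: the lemma is obtained there precisely by comparing the stratified decomposition of $\chi(\cA)$ over the number of split parts with the identity \eqref{eq:psi-sum} for $\psi(\cA)$, and noting $\varrho \geqslant 0$ for the inequality. Nothing is missing.
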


Since $\psi (\pmax) = \chi (\pmax)$ whenever $\pmax \in \VV$,
Lemma~\ref{lem:psi-chi-gen} together with Fact~\ref{fact:chi-diff}
provides an alternative way to express the difference $\psi (\pmax) -
\psi (\cA)$ in this case as
\[
   \psi (\pmax) - \psi (\cA) \, = \!
    \sum_{\cA \preccurlyeq \udo{\cB} \prec \pmax} \!
    \varrho (\cB) \;\, - \; \sum_{n\geqslant 2} \,
     (n-1) \! \! \sum_{\substack{\cB \text{ \rm splits } n \\ 
        \text{\rm parts of } \cA}}
          \! \! \varrho (\cB) \ts .
\]
This formula clearly shows that, in general, $\psi (\pmax) - \psi
(\cA)$ for $\cA \ne \pmax$ can have either sign.

Let us now look more closely into the lattice $\VV = \II (S)$,
which is simpler than $\PP (S)$, but still displays most of
the relevant general phenomena.

\section{Interval partitions}\label{sec:int-part}

This section deals with the case that $\VV = \II(S)$, where $S$ is a
fixed finite set, for instance $S= \{ 1,2,\dots , n\}$.

\begin{lemma}\label{lem:psi-is-chi}
  Let\/ $S$ be as above, and assume that\/ $\varrho^{S}\nts (\cC) = 0$
  for all\/ $\cC\in\II(S)$ with\/ $\lvert \cC \rvert > 2$. Then, for
  any\/ $\varnothing \ne U \subseteq S$, one also has\/ $\varrho^{U}
  \! (\cB) = 0$ for all\/ $\cB \in \II(U)$ with\/ $\lvert \cB \rvert >
  2$. Moreover, $\psi^{U} \! (\cA) = \chi^{U} \! (\cA)$ holds for
  all\/ $\varnothing \ne U \nts \subseteq S$ and all\/ $\cA \in \II
  (U)$.
\end{lemma}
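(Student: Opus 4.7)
The plan is to split the statement into its two parts and handle them in order, using the general decomposition from Lemma~\ref{lem:psi-chi-gen} for the second part. The key combinatorial input throughout is that restricting an interval partition of $S$ to a (naturally ordered) subset $U$ yields an interval partition of $U$, and that restriction can only merge parts, never split them.

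For the first assertion, I would argue directly from the definition \eqref{eq:rho-sub}. Fix $\varnothing\ne U\nts\subseteq S$ and $\cA\in\II(U)$ with $\lvert\cA\rvert>2$. Any $\cB\in\II(S)$ contributing to $\varrho^{U}\!(\cA)$ satisfies $\cB|^{\pa}_{U}=\cA$, and since the restriction map can only collapse parts, $\lvert\cB\rvert\geqslant\lvert\cB|^{\pa}_{U}\rvert=\lvert\cA\rvert>2$. By hypothesis, $\varrho^{S}\nts(\cB)=0$ for every such $\cB$, so the sum in \eqref{eq:rho-sub} vanishes.

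For the second assertion, I would invoke Lemma~\ref{lem:psi-chi-gen} applied within each sublattice $\II(U)$. The first part of the present lemma guarantees that the induced rates $\varrho^{U}\!$ are supported on partitions with at most two parts, so the correction term
\[
   \sum_{n\geqslant 2}(n-1)\!\!\sum_{\substack{\cB\text{ splits }n\\ \text{parts of }\cA}}\!\!\varrho^{U}\!(\cB)
\]
only sees $\cB\in\II(U)$ with $\lvert\cB\rvert=2$. The heart of the argument is the observation that such a $\cB$ can split at most one part of any $\cA\in\II(U)$: writing $U$ in its induced linear order and $\cB=\{B_1,B_2\}$ with $B_1$ an initial segment and $B_2$ its complement, the unique boundary index of $\cB$ lies in exactly one part $A_{i_{0}}$ of $\cA$, and $\cB$ splits $A_i$ only if that boundary lies strictly inside $A_i$, which happens only for $i=i_{0}$. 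Consequently every summand for $n\geqslant 2$ is empty, and $\psi^{U}\!(\cA)=\chi^{U}\!(\cA)$ follows.

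The only genuinely non-routine step is the final combinatorial observation about two-part interval partitions, but it is essentially automatic once one writes $U$ as a linearly ordered set and recognises that a two-part interval split is determined by a single cut point; everything else is bookkeeping with \eqref{eq:rho-sub} and Lemma~\ref{lem:psi-chi-gen}. I do not anticipate any hidden obstacle, since interval structure is preserved under restriction and no cancellations between rates have to be tracked.
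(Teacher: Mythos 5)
Your proof is correct and follows essentially the same route as the paper: the first part is the identical observation that restriction to $U$ cannot increase the number of parts, and the second part applies Lemma~\ref{lem:psi-chi-gen} on $\II(U)$ and kills the correction term using the single-cut-point structure of interval partitions. The paper phrases the latter as ``any $\cB$ splitting at least two parts of $\cA$ must itself have more than two parts, hence zero induced rate,'' which is the contrapositive of your ``a two-part $\cB$ splits at most one part''; the content is the same.
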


\begin{proof}
  Since the induced (or marginalised) rates for $\cB \in \II (U)$ are
  given by
\[
     \varrho^{U} \! (\cB) \, =  
     \sum_{\substack{\cA\in\II(S)\\ \cA |^{\pa}_{U} = \cB}}
     \varrho^{S} \nts (\cA) \ts ,
\]
the first claim is a simple consequence of the fact that, if $\cB \in
\II (U)$ with $\lvert \cB \rvert > 2$, any $\cA \in \II (S)$ with $\cA
|^{\pa}_{U} \! = \cB$ must have at least three parts, too, due to the
structure of the interval partitions.

When one considers the result of Lemma~\ref{lem:psi-chi-gen} for the
lattice $\VV = \II (U)$, one finds that each term in the difference
$\psi^{U}\!  (\cA) - \chi^{U}\! (\cA)$ is of the form $\varrho^{U}\!
(\cB)$ for some $\cB\in \II (U)$ with $\lvert \cB \rvert > 2$, hence
vanishes by the first part of our proof.
\end{proof}  

For the generic choice of the recombination rates on $\II(S)$,
our general solution has the form
\begin{equation}\label{eq:int-sol}
     a^{S}_{t} \nts (\cA) \, =  \sum_{\cB \succcurlyeq \cA}
     \theta^{S} \nts (\cA, \cB) \, \ee^{- \psi^{S} \nts\nts (\cB) \ts t} ,
\end{equation}
with $\theta^{S}$ determined recursively as explained above.  Let
$\mu^{S}$ denote the M\"obius function of the lattice $\II (S)$, which
is given by $\mu^{S} (\cA, \cB) = (-1)^{\lvert \cA \rvert - \lvert \cB
  \rvert}$ for $\cA \preccurlyeq \cB$ and $\mu^{S} (\cA, \cB) = 0$
otherwise. The definition of $\mu^{U}$ for $\II (U)$ with $\varnothing
\ne U\nts \subset S$ is analogous. These functions satisfy the
following product identity.
\begin{fact}\label{fact:product}
  Let\/ $\cA, \cB, \cC \in \II (S)$ with\/ $\cA \preccurlyeq \cB
  \preccurlyeq \cC$. Then, one has
\[
   \mu^{S} (\cA, \cB) \, = \, \prod_{i=1}^{r}
   \mu^{C_{i}} \bigl( \cA |^{\pa}_{C_{i}} , \cB |^{\pa}_{C_{i}} \bigr) ,
\]
   where\/ $r=\lvert \cC \rvert$, so\/ $\cC = \{ C_{1}, C_{2}, 
   \dots , C_{r}\}$.
\end{fact}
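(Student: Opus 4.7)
The plan is to exploit the explicit closed form of the M\"obius function for interval partitions, namely $\mu^{S} (\cA, \cB) = (-1)^{\lvert \cA \rvert - \lvert \cB \rvert}$ for $\cA \preccurlyeq \cB$ and zero otherwise, and then reduce the identity to a simple counting of parts.

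First I would observe that, since $\cB \preccurlyeq \cC$, every part of $\cB$ is a contiguous subset of exactly one part $C_i$ of $\cC$; consequently, the restrictions $\cB |^{\pa}_{C_{i}}$ are themselves interval partitions of the $C_i$, and the collection of their parts is in bijection with the parts of $\cB$. In particular, $\lvert \cB \rvert = \sum_{i=1}^{r} \lvert \cB |^{\pa}_{C_{i}} \rvert$. The same argument applied to $\cA \preccurlyeq \cC$ (which holds by transitivity) gives $\lvert \cA \rvert = \sum_{i=1}^{r} \lvert \cA |^{\pa}_{C_{i}} \rvert$. Moreover, because restriction to a subset preserves the refinement order, $\cA \preccurlyeq \cB$ yields $\cA |^{\pa}_{C_{i}} \preccurlyeq \cB |^{\pa}_{C_{i}}$ for each $i$, so the closed formula applies on the right-hand side.

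Next I would assemble the pieces. Each factor on the right equals
\[
   \mu^{C_{i}} \bigl( \cA |^{\pa}_{C_{i}} , \cB |^{\pa}_{C_{i}} \bigr)
   \, = \, (-1)^{\lvert \cA |^{\pa}_{C_{i}} \rvert
                 - \lvert \cB |^{\pa}_{C_{i}} \rvert} ,
\]
so their product collapses by additivity of exponents to
\[
   \prod_{i=1}^{r} (-1)^{\lvert \cA |^{\pa}_{C_{i}} \rvert
                 - \lvert \cB |^{\pa}_{C_{i}} \rvert}
   \, = \, (-1)^{\sum_{i}(\lvert \cA |^{\pa}_{C_{i}} \rvert
                 - \lvert \cB |^{\pa}_{C_{i}} \rvert)}
   \, = \, (-1)^{\lvert \cA \rvert - \lvert \cB \rvert}
   \, = \, \mu^{S} (\cA, \cB) ,
\]
which is the desired identity.

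There is really no hard step here: the only care needed is to verify that the restriction of an interval partition to a part $C_i$ is again an interval partition (so that the closed form for $\mu$ is indeed applicable on each factor), and that refinements restrict to refinements so that all M\"obius values are nonzero and the sign formula applies. The whole content of the fact is that $\lvert \cdot \rvert - \lvert \cdot \rvert$ is additive when the partition $\cC$ is used to split the underlying set into blocks, which is immediate from $\cB \preccurlyeq \cC$ and $\cA \preccurlyeq \cC$.
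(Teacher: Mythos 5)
Your proof is correct and follows essentially the same route as the paper's: both use the closed form $\mu(\cA,\cB)=(-1)^{\lvert\cA\rvert-\lvert\cB\rvert}$ together with the additivity $\lvert\cA\rvert=\sum_{i}\big|\cA|^{\pa}_{C_i}\big|$ (and likewise for $\cB$) coming from $\cA,\cB\preccurlyeq\cC$, and then collapse the product of signs. Your extra remarks on restrictions being interval partitions and preserving refinement are sensible bookkeeping but do not change the argument.
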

\begin{proof}
Since $\cA \preccurlyeq \cC$, we certainly have $\lvert \cA \rvert =
\sum_{i=1}^{r} \big| \cA |^{\pa}_{C_{i}} \big|$, and
analogously for $\cB$. Now, one simply calculates
\[
\begin{split}
  \prod_{i=1}^{r}
  \mu^{C_{i}} \bigl( \cA |^{\pa}_{C_{i}} , \cB |^{\pa}_{C_{i}} \bigr)
  \, & = \, \prod_{i=1}^{r}
  (-1)_{\phantom{\hat{I}}}^{\big| \cA |^{\pa}_{C_{i}} \big| - 
                   \big| \cB |^{\pa}_{C_{i}} \big|}
  = \, (-1)_{\phantom{\hat{I}}}^{\sum_{i=1}^{r} 
       \big| \cA |^{\pa}_{C_{i}} \big| - 
       \big| \cB |^{\pa}_{C_{i}} \big| } \\
  & = \, (-1)_{\phantom{I}}^{\lvert \cA \rvert - \lvert \cB \rvert}
  = \, \mu^{S} (\cA, \cB) \ts ,
\end{split}
\]
which establishes the product formula.
\end{proof}

With this preparation, one finds the following simplification
of Eq.~\eqref{eq:int-sol}.

\begin{prop}\label{prop:reduction-to-linear}
  Under the assumptions of Lemma~$\ref{lem:psi-is-chi}$, where\/
  $\varrho (\cC) = 0$ for all\/ $\cC \in \II (S)$ with\/ $\lvert \cC
  \rvert > 2$, the solution formula \eqref{eq:int-sol} simplifies to
\[
     a^{S}_{t} \nts (\cA) \, = \sum_{\cB \succcurlyeq \cA}
     \mu^{S} \nts (\cA, \cB) \, \ee^{-\chi^{S} \nts (\cB) \ts t}
\]   
with\/ $\chi^{S}$ as above in Eq.~\eqref{eq:chi-sum}. Moreover, this
formula for\/ $a^{S}_{t}$ holds for all remaining choices of the
recombination rates.
\end{prop}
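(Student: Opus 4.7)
The approach is in two stages. Stage one treats the generic situation by identifying $\theta^U(\cA,\cB)=\mu^U(\cA,\cB)$ on every induced sublattice; stage two removes the genericity assumption by directly verifying that the proposed closed-form expression solves \eqref{eq:a-ODE}.

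In stage one, Lemma \ref{lem:psi-is-chi} reduces the exponents in \eqref{eq:int-sol} from $\psi^S\nts(\cB)$ to $\chi^S\nts(\cB)$, leaving only the coefficients to be identified. I would induct on $|U|$. The case $|U|=1$ is trivial, since $\II(U)=\{\pmax\}$. For the step, fix $\cB\prec\pmax$ in $\II(U)$: by hypothesis, the recursion \eqref{eq:rec} runs only over $2$-part coarsenings $\cC=\{C_1,C_2\}$ of $\cB$, and each $C_i$ is a proper subset of $U$. The inductive hypothesis together with Fact \ref{fact:product} collapses the product $\prod_i \theta^{C_i}(\cA|^{\pa}_{C_i},\cB|^{\pa}_{C_i})$ into $\mu^U(\cA,\cB)$, which pulls out of the $\cC$-sum. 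The remaining rate sum equals $\chi^U(\pmax)-\chi^U(\cB)$ by Fact \ref{fact:chi-diff} (only $2$-part terms survive under our hypothesis), cancelling the denominator and giving $\theta^U(\cA,\cB)=\mu^U(\cA,\cB)$. The boundary case $\cB=\pmax$ then follows because \eqref{eq:theta-init} and the defining recursion of the M\"obius function have identical form, and the relevant values with $\cC \prec \pmax$ have just been matched.

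For stage two I would verify directly that the candidate
\[
  \tilde{a}^{\ts U}_{t}(\cA) \, := \,
    \sum_{\cC\succcurlyeq\cA} \mu^U(\cA,\cC)\,\ee^{-\chi^U\nts(\cC)\ts t}
\]
solves \eqref{eq:a-ODE} with initial condition $\delta(\cA,\pmax)$ for \emph{any} admissible rate configuration. Differentiating yields $-\sum_{\cC}\chi^U(\cC)\,\mu^U(\cA,\cC)\,\ee^{-\chi^U\nts(\cC)t}$. On the right-hand side of \eqref{eq:a-ODE}, each $2$-part $\cB=\{B_1,B_2\}$ with $\cB\succcurlyeq\cA$ contributes $\varrho^U(\cB)\,\tilde{a}^{\ts B_1}_{t}(\cA|^{\pa}_{B_1})\,\tilde{a}^{\ts B_2}_{t}(\cA|^{\pa}_{B_2})$. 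Fact \ref{fact:product} and the additivity $\chi^U(\cC)=\chi^{B_1}(\cC|^{\pa}_{B_1})+\chi^{B_2}(\cC|^{\pa}_{B_2})$ for $\cC\preccurlyeq\cB$ (a consequence of \eqref{eq:psi-two} together with Lemma \ref{lem:psi-is-chi}) rewrite this product as $\sum_{\cA\preccurlyeq\cC\preccurlyeq\cB}\mu^U(\cA,\cC)\ee^{-\chi^U\nts(\cC)t}$. Exchanging the order of summation and invoking Fact \ref{fact:chi-diff} in the reduced form $\sum_{\cC\preccurlyeq\cB\prec\pmax,\,|\cB|=2}\varrho^U(\cB)=\chi^U(\pmax)-\chi^U(\cC)$ causes the $-\chi^U(\pmax)\,\tilde{a}^{\ts U}_{t}(\cA)$ term to combine with the nonlinear sum, reproducing the derivative. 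The initial condition is the standard M\"obius inversion identity $\sum_{\cC\succcurlyeq\cA}\mu^U(\cA,\cC)=\delta(\cA,\pmax)$. Uniqueness from Theorem \ref{thm:main} then identifies $a^U_t=\tilde{a}^{\ts U}_t$ unconditionally.

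The main obstacle is the bookkeeping in stage two: the multiplicativity of $\mu^U$ and the additivity of $\chi^U$ on intervals $[\cA,\cB]$ with $|\cB|=2$ must be applied in the correct order, and the swap of summation order must be tracked carefully, for the linear $-\chi^U(\pmax)$ term and the quadratic sum to conspire into $-\sum_\cC\chi^U(\cC)$ times the candidate. Both identities are routine once isolated, but their joint use is what makes the direct verification go through for all rate configurations, including the degenerate ones where the $\theta$-recursion \eqref{eq:rec} would have vanishing denominators.
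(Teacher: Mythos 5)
Your stage one is essentially the paper's own proof: the same induction on $\lvert U\rvert$, the same use of Fact~\ref{fact:product} to collapse the product of lower-level $\theta$'s into $\mu^{U}(\cA,\cB)$, the same evaluation of the residual rate sum as $\chi^{U}(\pmax)-\chi^{U}(\cB)$ via Fact~\ref{fact:chi-diff}, and the same treatment of the boundary case $\cB=\pmax$ through Eq.~\eqref{eq:theta-init} and M\"obius inversion. Where you genuinely diverge is in the final claim (validity for \emph{all} remaining rate choices): the paper dispatches this in one sentence as ``clear'', whereas you supply a direct verification that the closed-form candidate solves Eq.~\eqref{eq:a-ODE} unconditionally, using the multiplicativity of $\mu$, the additivity of $\chi$ over the parts of a two-part coarsening (via Eq.~\eqref{eq:psi-two} and Lemma~\ref{lem:psi-is-chi}), the interchange of the $\cB$- and $\cC$-sums, and uniqueness of the Cauchy problem. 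That verification is correct --- the two-block product does telescope to $\sum_{\cA\preccurlyeq\cC\preccurlyeq\cB}\mu^{U}(\cA,\cC)\ts\ee^{-\chi^{U}(\cC)t}$, and the swapped sum yields the factor $\chi^{U}(\pmax)-\chi^{U}(\cC)$ that combines with the linear term --- and it buys a self-contained justification that does not rely on the recursion \eqref{eq:rec} (whose denominators may vanish) or on a continuity-in-parameters argument; the cost is the extra bookkeeping you correctly flag. One should just note explicitly that the verification proceeds by induction on $\lvert U\rvert$ through the hierarchy of subsystem ODEs, since Eq.~\eqref{eq:a-ODE} for level $U$ takes the marginals $a^{B_i}_{t}$ as inputs; once those are known, the equation is linear in $a^{U}_{t}$, so uniqueness at each level is immediate.
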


\begin{proof}
  In view of Lemma~\ref{lem:psi-is-chi}, it remains to prove that
  $\theta^{S}\! = \mu^{S}$.  This will be done inductively again, so
  we will show that $\theta^{U}\! = \mu^{U}$ holds for all non-empty
  $U \nts \subseteq S$.

  When $\lvert U \rvert = 1$, one has $\II (U) = \{ \pmax \}$ and
  $\theta^{U}\! (\pmax,\pmax) = 1 = \mu^{U}\! (\pmax,
  \pmax)$. Likewise, when $\lvert U \rvert = 2$, the corresponding
  lattice is $\II (U) = \{ \pmin, \pmax\}$. It is clear that
  $\theta^{U}\! (\pmin, \pmin) = \theta^{U}\! (\pmax, \pmax) = 1$, and
  one easily checks that $\theta^{U}\! (\pmin, \pmax) = -1$ together
  with $\theta^{U}\! (\pmax, \pmin) = 0$, so $\theta^{U} \! = \mu^{U}$
  in this case, too.

  Now, let us assume our claim to be true for all finite sets $U$ with
  cardinality $\lvert U \rvert \leqslant n$, and consider a set $S$
  with $\lvert S \rvert = n+1$.  We know that $\theta^{S}\nts
  (\cA,\cB) = 0$ whenever $\cA \not\preccurlyeq \cB$, so let $\cA,\cB
  \in \II (S)$ with $\cA \preccurlyeq \cB$ and assume first that $\cB
  \prec \pmax$ (we will deal with the remaining case later). Now, our
  recursion together with the induction assumption and
  Lemma~\ref{lem:psi-is-chi} gives us
\[
\begin{split}
   \theta^{S}\nts (\cA, \cB) \, & = 
      \sum_{\cB \preccurlyeq \udo{\cC} \prec \pmax}
      \frac{\varrho^{S}\nts (\cC)}{\psi^{S}\nts (\pmax) -
      \psi^{S}\nts (\cB)}\,  \prod_{i=1}^{\lvert \cC \rvert} \theta^{C_i} 
      (\cA |^{\pa}_{C_i} , \cB |^{\pa}_{C_i} ) \\
   & = \sum_{\cB \preccurlyeq \udo{\cC} \prec \pmax}
       \frac{\varrho^{S}\nts (\cC)}{\chi^{S}\nts (\pmax) - 
       \chi^{S}\nts (\cB)}\, \prod_{i=1}^{\lvert \cC \rvert} \mu^{C_i} 
       (\cA |^{\pa}_{C_i} , \cB |^{\pa}_{C_i} ) \\[1mm]
   & = \sum_{\cB \preccurlyeq \udo{\cC} \prec \pmax}
   \frac{\varrho^{S}\nts (\cC)}{\chi^{S}\nts (\pmax) - 
   \chi^{S}\nts (\cB)}\,
   \mu^{S}\nts (\cA, \cB) \, = \, \frac{\mu^{S}\nts (\cA, \cB)}
   {\chi^{S}\nts (\pmax) - \chi^{S}\nts (\cB)}
   \sum_{\cB \preccurlyeq \udo{\cC} \prec \pmax} \varrho^{S}\nts (\cC) \ts ,
\end{split}
\]
where the second line follows because $\cC \prec \pmax$ implies that
all parts $C_i$ satisfy $\lvert C_i \rvert \leqslant n$, while the
first step in the last line is a consequence of
Fact~\ref{fact:product}.

Now, using the assumption and Lemma~\ref{lem:psi-is-chi}, the last sum
evaluates as
\[
   \sum_{\cB \preccurlyeq \udo{\cC} \prec \pmax} \varrho^{S}\nts (\cC)
   \; = \, \sum_{\cC \ne \pmax} \varrho^{S}\nts (\cC) \; - \!
   \sum_{\cD \not \in [\cB, \pmax]} \varrho^{S}\nts (\cD) \, = \,
   \chi^{S}\nts (\pmax) - \chi^{S}\nts (\cB) \ts ,
\]
wherefore we get $\theta^{S}\nts (\cA,\cB) = \mu^{S}\nts (\cA,\cB)$
for all $\cA \preccurlyeq \cB \prec \pmax$. Finally, since
$\theta^{S}\nts (\pmax, \pmax) = 1$ is automatic, we consider an
arbitrary $\cA\prec \pmax$ and find
\[
    \theta^{S}\nts (\cA, \pmax) \, = \,
     - \! \sum_{\cA\preccurlyeq \udo{\cC}\prec \pmax}
    \theta^{S}\nts (\cA, \cC) \, = 
     - \! \sum_{\cA\preccurlyeq \udo{\cC}\prec \pmax}
    \mu^{S}\nts (\cA,\cC)
\]
from our previous argument. This finally also gives $\theta^{S}\nts
(\cA, \pmax) = \mu^{S}\nts (\cA, \pmax)$ from the M\"obius inversion
formula.

The final claim is clear, because the solution now extends to the
non-generic cases as well.
\end{proof}

This result explicitly shows the compatibility of the general solution
with the special case from \cite{BB,MB}, which was also mentioned in
\cite{main}. What is more, it also shows that the linearity situation
is slightly more general in the sense that one can still allow some
recombination rates beyond the two parent case to be positive. Let
us look into this point in more detail.

\section{Systems with up to four sites}\label{sec:examples}

When we deal with $\II (S)$ for any non-empty finite set $S$ with up
to $3$ elements, we know that
\[
    \psi^{S} \, = \, \chi^{S}
    \quad \text{together with} \quad
    \theta^{S} \, = \; \mu^{S}
    \quad \text{and} \quad
    \eta^{S} \, = \; \zeta^{S} .
\]
This means that the general solution is given by
\begin{equation}\label{eq:simple-sol}
     a^{S}_{t} (\cA) \, = \sum_{\cB \succcurlyeq \cA}
     \mu^{S} (\cA, \cB) \, \ee^{-\chi^{S} (\cB) \, t}
\end{equation}
for the initial condition $a^{S}_{0} (\cA) = \delta (\cA,\pmax)$,
irrespective of the values of the rates $\varrho$. For $\lvert S
\rvert = 1$, this gives $a^{\pa}_{t}\equiv 1$ (we drop the upper index
here), while $\lvert S \rvert = 2$, meaning $\II (S) = \{ \pmax, \pmin
\}$, results in $\chi (\pmax) = \varrho (\pmin)$ and thus in the
coefficient functions
\[
     a^{\pa}_{t} (\pmax) \, = \, \ee^{-\varrho (\pmin) \ts t}
     \quad \text{and} \quad
     a^{\pa}_{t} (\pmin) \, = \, 1 - \ee^{-\varrho (\pmin) \ts t} .
\]
For $S=\{1,2,3\}$, one has $\pmax = (123)$, $\pmin=(1|2|3)$ and decay
rates
\[
\begin{array}{c | c}
    \cA  & \chi \\ \hline
    (123)^{\vphantom{\hat{I}}}
         &  \varrho(1|23) + \varrho(12|3) + \varrho(1|2|3) \\
    (1|23) & \varrho(12|3) + \varrho(1|2|3) \\
    (12|3) & \varrho(1|23) + \varrho(1|2|3) \\
    (1|2|3) & 0
\end{array}    
\]
The coefficient functions now read
\[
\begin{split}
   a^{\pa}_{t} (\pmax) \, & = \, \ee^{-\chi(\pmax)\ts t} , \\
   a^{\pa}_{t} (1|23) \, &  = \, \ee^{-\chi(1|23) \ts t} 
                       - \ee^{-\chi(\pmax) \ts t} , \\
   a^{\pa}_{t} (12|3) \, &  = \, \ee^{-\chi(12|3) \ts t} 
                       - \ee^{-\chi(\pmax) \ts t} , \\
   a^{\pa}_{t} (\pmin) \, &  = \, 1 - \ee^{-\chi(1|23) \ts t} 
                   -\ee^{-\chi(12|3) \ts t} + \ee^{-\chi(\pmax) \ts t} . 
\end{split}
\]
Here, one can easily check that they indeed form a probability vector.
Note that formula \eqref{eq:simple-sol} also holds for $\PP (S)$, then
with the M\"{o}bius function $\mu$ and decay rate $\chi$ of $\PP (S)$;
see \cite{main} for more. Note that the lattice $\PP (S)$ contains one
additional partition, namely $(13|2)$, in comparison to $\II (S)$. Up
to this point, it is obvious that the probability vector $a^{\pa}_{t}$
also satisfies a linear ODE system --- a point of view we will look at
more closely in Section~\ref{sec:outlook} below.  \medskip

For $S = \{ 1,2,3,4 \}$, things start to get more complicated. 
The lattice $\II (S)$, in obvious notation, is given by
\[
\begin{array}{c@{}c@{}c@{}c@{}c}
   &  & (1 2 3 4) & \\
 & \swarrow & \downarrow & \searrow & \\
(1 | 2 3 4) & & (12|34) & & (123|4) \\
   \downarrow & \searrow \hspace*{-2.5ex} \swarrow & & 
     \swarrow \hspace*{-2.2ex} \searrow & \downarrow \\
(1|2|34) & & (1|23|4) & & (12|3|4) \\
 & \searrow & \downarrow & \swarrow  & \\
 & & (1|2|3|4) & &
\end{array}   
\]
where an arrow points towards the next refined partition.  An
application of Lemma~\ref{lem:psi-chi-gen} to $\VV = \II (S)$ gives
the rate function as
\begin{equation}\label{eq:psi-four-sites}
   \psi (\cA) \, = \, \chi (\cA) +
   \bigl(\varrho (1|23|4) + \varrho (\pmin) \bigr)
   \delta \bigl(\cA, (12|34) \bigr) ,
\end{equation}
with $\chi$ as in Eq.~\eqref{eq:chi-sum}.  An application of the
recursion formula now results in the following expression for
$\theta$, which we show in (upper triangular) matrix form for
convenience,
\begin{equation}\label{eq:theta-4-sites}
\begin{array}{c|cccccccc}
  & \rotatebox[origin=c]{90}{$\,(1|2|3|4)\,$}
  & \rotatebox[origin=c]{90}{$(12|3|4)$}
  & \rotatebox[origin=c]{90}{$(1|23|4)$} 
  & \rotatebox[origin=c]{90}{$(1|2|34)$}
  & \rotatebox[origin=c]{90}{$(123|4)$} 
  & \rotatebox[origin=c]{90}{$(12|34)$} 
  & \rotatebox[origin=c]{90}{$(1|234)$}
  & \rotatebox[origin=c]{90}{$(1234)$} \\
\hline
(1|2|3|4)^{\vphantom{\hat{I}}} 
          & \,1\, & -1 & -1 & -1 &  1 &  x &  1 & -x \\
(12|3|4)  & 0 &  1 &  0 &  0 & -1 & -x &  0 &  x \\
(1|23|4)  & 0 &  0 &  1 &  0 & -1 &  0 & -1 &  1 \\
(1|2|34)  & 0 &  0 &  0 &  1 &  0 & -x & -1 &  x \\
(123|4)   & 0 &  0 &  0 &  0 &  1 &  0 &  0 & -1 \\
(12|34)   & 0 &  0 &  0 &  0 &  0 &  x &  0 & -x \\
(1|234)   & 0 &  0 &  0 &  0 &  0 &  0 &  1 & -1 \\
(1234)    & 0 &  0 &  0 &  0 &  0 &  0 &  0 &  1
\end{array}
\end{equation}
Here, one has 
\[
   x \, = \, \frac{\varrho (12|34)}{ \psi(\pmax) - \psi(12|34)}
   \quad \text{with} \quad
    \psi(\pmax) - \psi(12|34) \, = \,
    \varrho (12|34) - \bigl( \varrho (1|23|4)
     + \varrho (1|2|3|4) \bigr) .
\]
This clearly shows that $x$ can have either sign in general.  When the
denominator vanishes, we are outside the validity of the simple
solution formula; see \cite[Ch.~9]{main} for details. We shall come
back to this case below.

Before we expand on the solution, let us stick to the case that $x\ne
0$, where $\theta$ is invertible.  The inverse function $\eta$ is
still surprisingly simple, and given by $\eta (\cA, \cB) = \zeta (\cA,
\cB)$ unless $\cA = \cB = (12|34)$, in which case one has
\[ 
   \eta \bigl( (12|34),(12|34)\bigr)
   = \, \frac{1}{x} \, = \,
   \frac{\psi(\pmax) - \psi(12|34)}{\varrho(12|34)} 
   \, = \, 1 - \frac{\varrho(\pmin) + \varrho(1|23|4)}
   {\varrho(12|34)}\ts .
\]
So, one sees that $\theta = \mu$ (and thus $\eta = \zeta$)
happens precisely when $\psi = \chi$, which is equivalent
with the condition $\varrho (\pmin) = \varrho(1|23|4) =0$.

\begin{coro}\label{coro:all-clear}
  In the case of the lattice\/ $\II (\{ 1,2,3,4\})$, we get\/ $\psi
  (\cA) = \chi (\cA)$ for\/ \emph{all} $\cA$ precisely when\/ $\varrho
  (\pmin) = \varrho(1|23|4) =0$. Then, one also has\/ $\theta = \mu$
  together with\/ $\eta = \zeta$, and the linear solution formula
  applies to\/ \emph{all} choices of the remaining recombination
  rates.  \qed
\end{coro}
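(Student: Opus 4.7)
The plan is to settle the equivalence first, then reduce the remaining claims to the single identity $x = 1$ via the explicit matrix \eqref{eq:theta-4-sites}, and finally address the degenerate subcase separately. To begin, I would simply read off Eq.~\eqref{eq:psi-four-sites}: the difference $\psi(\cA) - \chi(\cA)$ equals $\bigl(\varrho(1|23|4) + \varrho(\pmin)\bigr)\ts\delta\bigl(\cA, (12|34)\bigr)$ for every $\cA \in \II(\{1,2,3,4\})$. Since both recombination rates are non-negative, this quantity vanishes identically in $\cA$ if and only if $\varrho(1|23|4) = \varrho(\pmin) = 0$, which establishes the stated equivalence.

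Assuming this vanishing condition, my next step would be to apply Fact~\ref{fact:chi-diff} at the partition $(12|34)$. Since $(12|34)$ is the only interval partition strictly between itself and $\pmax$ in $\II(\{1,2,3,4\})$, the sum in that fact collapses to a single term and yields $\chi(\pmax) - \chi\bigl((12|34)\bigr) = \varrho\bigl((12|34)\bigr)$. Combined with $\psi = \chi$, this forces $x = \varrho(12|34)/\bigl(\psi(\pmax) - \psi((12|34))\bigr) = 1$ whenever $\varrho(12|34) > 0$. Substituting $x = 1$ into the matrix \eqref{eq:theta-4-sites} and comparing entry by entry with the closed form $\mu^{S}(\cA,\cB) = (-1)^{\lvert\cA\rvert - \lvert\cB\rvert}$ of the M\"obius function on $\II(\{1,2,3,4\})$ then yields $\theta = \mu$ directly. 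The identity $\eta = \zeta$ follows at once by inversion in the incidence algebra, since $\mu = \zeta^{-1}$ by definition.

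The principal obstacle is the degenerate subcase $\varrho(12|34) = 0$, where the denominator $\psi(\pmax) - \psi((12|34))$ also vanishes; indeed, $\chi(\pmax) = \chi((12|34))$ in this situation, so the exponentials in \eqref{eq:int-sol} associated with $\pmax$ and $(12|34)$ coincide and the recursion \eqref{eq:rec} degenerates at $x$. I would handle this in one of two ways: either by continuity, passing to the limit $\varrho(12|34) \to 0^{+}$ in the explicit formula
\[
   a^{S}_{t}(\cA) \, = \sum_{\cB \succcurlyeq \cA}
   \mu^{S}(\cA,\cB)\,\ee^{-\chi^{S}(\cB)\ts t} ,
\]
which has no singularity in this parameter, or by substituting this expression directly into the nonlinear ODE \eqref{eq:a-ODE} and verifying it termwise, a finite algebraic check resting on Fact~\ref{fact:product} together with the marginalisation consistency. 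Either route confirms that the linear solution formula applies to \emph{all} remaining choices of the recombination rates and completes the argument.
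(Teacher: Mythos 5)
Your proposal is correct and follows essentially the same route as the paper, which justifies this corollary through the surrounding text: the equivalence is read off from Eq.~\eqref{eq:psi-four-sites}, the identity $\theta=\mu$ (hence $\eta=\zeta$) comes from observing that $\psi=\chi$ forces $x=1$ in the matrix \eqref{eq:theta-4-sites}, and the extension to all remaining rates is secured because the only correction term in the explicit solution \eqref{eq:sol-4-sites} carries the prefactor $\varrho(\pmin)+\varrho(1|23|4)=0$. Your derivation of $x=1$ via Fact~\ref{fact:chi-diff} and your continuity/direct-verification treatment of the degenerate case $\varrho(12|34)=0$ are cosmetic variants of the same argument.
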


Now, since $4$ is the smallest system size where $\theta$ can become
non-invertible (which happens when $x=0$), we can go one step back and
write down the ODE system for the functions $a^{\pa}_{t} (\cA)$ on
$\II (S)$ explicitly as
\begin{equation}\label{eq:4-sites}
   \dot{a}^{\pa}_{t} (\cA) \, = \, - \psi (\pmax) \, a^{\pa}_{t} (\cA)
   \; + \sum_{\cA \preccurlyeq \udo{\cB} \prec \pmax}
   \mu (\cA, \cB) \sum_{\cB \preccurlyeq \udo{\cC} \prec \pmax}
   \! \varrho (\cC)  \, \ee^{-\psi (\cB) \ts t} ,
\end{equation}
where we have again dropped the upper index. This ODE follows from
\cite[Prop.~7]{main} by restriction to the lattice $\II (S)$ and the
insertion of the explicit solution for the possible subsystems, which
have at most $3$ sites and are thus given as above. Alternatively,
one can use Eq.~\eqref{eq:a-ODE} for a direct derivation. With
\cite[Lemma~8]{main}, see also Fact~\ref{fact:ODE} below, 
the solution of Eq.~\eqref{eq:4-sites} is
\[
   a^{\pa}_{t} (\cA) \, = \, a^{\pa}_{0} (\cA) \, 
   \ee^{-\psi(\pmax) \ts t} \; + \sum_{\cA \preccurlyeq \udo{\cB} \prec \pmax}
   \mu (\cA, \cB) \sum_{\cB \preccurlyeq \udo{\cC} \prec \pmax}
   \varrho (\cC) \,
   E^{}_{0} \bigl(\psi(\pmax),\psi(\cB); t \bigr)
\]
with the function
\begin{equation}\label{eq:E-fun}
   E^{}_{0} (\alpha, \beta; t) \, = \,
   \begin{cases} t \, \ee^{-\alpha t} , & 
      \text{if $\alpha = \beta$}, \\
   \frac{1}{\alpha - \beta} 
      \bigl(\ee^{-\beta t} - \ee^{-\alpha t} \bigr), &
       \text{otherwise}.
   \end{cases}
\end{equation}
Note that $E^{\pa}_{0}$ is continuous in the parameters, and symmetric
under the exchange $\alpha \leftrightarrow \beta$.

Now, observing that $\psi (\pmax) - \psi (\cA) = \chi (\pmax) - \chi
(\cA)$ for all $\cA \ne (12|34)$ by Eq.~\eqref{eq:psi-four-sites} and
using the first identity of Fact~\ref{fact:chi-diff}, the solution
simplifies to
\[
\begin{split}
  a^{\pa}_{t} (\cA) \, = \; \, & a^{\pa}_{0} (\cA) \, 
   \ee^{-\psi(\pmax) \ts t} \; + \!\!
   \sum_{\substack{\cA \preccurlyeq \udo{\cB} \prec \pmax \\
          \cB \ne (12|34)}} \!\! \mu (\cA, \cB) \, 
    \bigl( \ee^{-\psi (\cB) \ts t} - \ee^{-\psi (\pmax)\ts t} \bigr) \\
    & + \mu \bigl( \cA, (12|34)\bigr) \,
   \varrho(12|34) \, E^{\pa}_{0} (\psi (\pmax), \psi(12|34); t) \ts ,
\end{split}
\]
with $\varrho(12|34) = \bigl( \psi(\pmax) - \psi(12|34) \bigr)
+  \varrho(1|23|4) + \varrho(\pmin)$.  
The initial condition relevant to us is $a^{\pa}_{0}
(\cA) = \delta (\cA,\pmax)$.  Together with the upper M\"{o}bius
summation, $\sum_{\cA \preccurlyeq\udo{\cB} \preccurlyeq \pmax} \mu
(\cA,\cB) = \delta (\cA,\pmax)$, the last equation then further
simplifies to
\begin{equation}\label{eq:sol-4-sites}
    a^{\pa}_{t} (\cA) \,  =   \sum_{\udo{\cB} \succcurlyeq\cA}
    \! \mu (\cA,\cB) \, \ee^{-\psi (\cB) \ts t} \;
      + \; \mu\bigl(\cA, (12|34)\bigr)
    \bigl( \varrho (\pmin) + \varrho (1|23|4) \bigr)
   E^{\pa}_{0} (\psi (\pmax), \psi(12|34); t) \ts .
\end{equation}
Here, when $\varrho(1|23|4) + \varrho(\pmin) \ne 0$, the last factor
becomes $ t \, \ee^{-\psi (\pmax) \ts t}$ (via l'Hospital's rule, in
agreement with the definition of $E^{\pa}_{0}$)
when $\varrho(12|34) \to \varrho(1|23|4) + \varrho(\pmin)$, which
means $\psi(12|34) \to\psi(\pmax)$. This shows the appearance of a new
function in the expansion when we hit one of the essential
singularities of the recursive solution described earlier.  Note that
the full solution can still be extracted from the $\theta$-matrix, if one
starts with a generic case and then rewrites the contributions with a
prefactor $\pm x$ in terms of the function $E^{\pa}_{0}$.

In general, when $\varrho(1|23|4) = \varrho(\pmin) = 0$, we are back
in the situation of Corollary~\ref{coro:all-clear}, and thus in the
realm of the linear scheme. Note that no restrictions occur for the
values of $\varrho$ on $(1|2|34)$ or $(12|3|4)$, so that the linear
scheme actually slightly extends beyond $\II_{2} (S)$. In any case,
one has $a^{\pa}_{t} (\cA) \geqslant 0$ for all $\cA\in\II(S)$ and all
$t\geqslant 0$, due to the preservation of positivity under the
forward flow, together with $\sum_{\cA\in\II(S)} a^{\pa}_{t} (\cA)=1$
for all $t\geqslant 0$, as a result of $\sum_{\cA\in\II(S)}
\mu\bigl(\cA,(12|34)\bigr) = 0$ and $\sum_{\udo{\cA} \preccurlyeq\cB}
\mu (\cA,\cB) = \delta(\pmin,\cB)$ together with $\psi(\pmin) = 0$,
which is nothing but a direct verification of the norm preservation
mentioned in the Introduction.

\begin{coro}\label{coro:sol-4-sites}
  The solution of the recombination equation \eqref{eq:main-ODE} on\/
  $\cM (X)$, for\/ $S=\{ 1,2,3,4\}$ with the lattice\/ $\VV = \II (S)$
  and with a probability measure\/ $\omega^{\pa}_{0} \in \cP (X)$ as
  initial condition, is given by
\[
      \omega^{}_{t} \, =
      \sum_{\cA\in\II (S)} a^{\pa}_{t} (\cA) \, 
      R^{\pa}_{\nts\cA} (\omega^{\pa}_{0}) \ts ,
\]      
   with the convex coefficients\/ $a^{\pa}_{t} (\cA)$ of
    Eq.~\eqref{eq:sol-4-sites}.  \qed
\end{coro}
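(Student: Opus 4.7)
The plan is to reduce the assertion to a verification of the coefficient formula, and then to assemble the ingredients already prepared in Section~\ref{sec:examples}. By Theorem~\ref{thm:main}, $\omega^{}_t$ necessarily has the form $\sum_{\cA\in\II(S)} a^{\pa}_t(\cA)\, R^{\pa}_{\!\cA}(\omega^{\pa}_0)$, where the coefficients are the unique solution of the ODE system \eqref{eq:a-ODE} with initial condition $a^{\pa}_0(\cA)=\delta(\cA,\pmax)$; it therefore suffices to check that the explicit expression \eqref{eq:sol-4-sites} satisfies both requirements and that the resulting family $\{a^{\pa}_t(\cA)\}$ is a probability vector for every $t\geqslant 0$.

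The key observation is that every proper subsystem of $S=\{1,2,3,4\}$ has at most three elements, so its coefficient functions are given in closed form by \eqref{eq:simple-sol}, as recorded in Proposition~\ref{prop:reduction-to-linear} and the explicit listing at the start of Section~\ref{sec:examples}. Substituting these subsystem expressions into the right-hand side of \eqref{eq:a-ODE} turns the nonlinear equation into the linear inhomogeneous ODE \eqref{eq:4-sites}, which I would solve by variation of constants using \cite[Lemma~8]{main} and the function $E_0$ from \eqref{eq:E-fun}. I would then simplify by invoking Eq.~\eqref{eq:psi-four-sites} to replace $\psi(\cB)$ with $\chi(\cB)$ whenever $\cB\ne(12|34)$, and use Fact~\ref{fact:chi-diff} to identify $\psi(\pmax)-\psi(\cB)=\sum_{\cB\preccurlyeq\udo{\cC}\prec\pmax}\varrho(\cC)$; this collapses each non-degenerate contribution to $\mu(\cA,\cB)\bigl(\ee^{-\psi(\cB)t}-\ee^{-\psi(\pmax)t}\bigr)$, and the upper M\"obius identity $\sum_{\cB\succcurlyeq\cA}\mu(\cA,\cB)=\delta(\cA,\pmax)$ then absorbs the $\ee^{-\psi(\pmax)t}$ contributions, producing \eqref{eq:sol-4-sites} and simultaneously verifying the initial condition.

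The main obstacle is the partition $(12|34)$, where the denominator $\psi(\pmax)-\psi(12|34)$ may vanish and the recursion of Theorem~\ref{thm:theta-rec} breaks down. I would bypass this by keeping the factor $E_0\bigl(\psi(\pmax),\psi(12|34);t\bigr)$ in place rather than splitting it into two exponentials: by \eqref{eq:E-fun}, $E_0$ is symmetric and jointly continuous in its parameters, with l'Hospital supplying the value $t\ts\ee^{-\alpha t}$ in the coincident case, so the coefficient of $(12|34)$ in \eqref{eq:sol-4-sites} interpolates smoothly across the singularity. Since both sides of \eqref{eq:a-ODE} depend continuously on the recombination rates, a limiting argument confirms that \eqref{eq:sol-4-sites} remains a solution at the degeneracy. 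Non-negativity of $a^{\pa}_t(\cA)$ and their summation to $1$ are then automatic: the former follows from the preservation of $\cP(X)$ under the forward flow asserted in Theorem~\ref{thm:main}, and the latter from $\sum_{\cA\in\II(S)}\mu\bigl(\cA,(12|34)\bigr)=0$ together with the lower M\"obius identity $\sum_{\udo{\cA}\preccurlyeq\cB}\mu(\cA,\cB)=\delta(\pmin,\cB)$ and $\psi(\pmin)=0$, as indicated in the paragraph preceding the statement.
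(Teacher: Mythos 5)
Your plan is correct and follows essentially the same route as the paper: reduce to the coefficient ODE via Theorem~\ref{thm:main}, insert the closed-form subsystem solutions (all subsystems having at most three sites) to obtain the linear inhomogeneous system \eqref{eq:4-sites}, solve it with Fact~\ref{fact:ODE} and the function $E_0$, and simplify via Eq.~\eqref{eq:psi-four-sites}, Fact~\ref{fact:chi-diff} and the M\"obius identities, with the degeneracy at $(12|34)$ absorbed by the continuity of $E_0$ in its parameters. This matches the derivation the paper gives in the text preceding the corollary.
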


\section{Recombination with interval partitions
 for five sites}\label{sec:five-sites}

Let us turn to the case of five sites, $S=\{1,2,3,4,5\}$. Here, as a
result of Lemma~\ref{lem:psi-chi-gen}, the decay rates are related by
\[
    \psi (\cA) \, = \, \chi (\cA) \; + \!
    \sum_{\substack{\cB \text{ splits two} \\ \text{parts of } \cA}} 
    \! \varrho (\cB) \ts ,
\]
as a partition $\cA \in \II (S)$ can have at most two splittable
parts.  In particular, $\psi (\cA) = \chi (\cA)$ for any $\cA$ with at
most one non-singleton part. For the other partitions, we get
\begin{equation}\label{eq:rho-tab}
  \begin{array}{c | c}
  \cA & \psi (\cA) - \chi (\cA) \\ \hline
  (12|345)^{\vphantom{\hat{I}}}  
      & \varrho(1|23|45) + \varrho(1|234|5) + 
        \varrho(1|2|3|45) + \varrho(1|2|34|5) +
        \varrho(1|23|4|5) + \varrho(\pmin) \\
  (123|45)  & \varrho(12|34|5) + \varrho(1|234|5) + 
        \varrho(12|3|4|5) + \varrho(1|23|4|5) +
        \varrho(1|2|34|5) + \varrho(\pmin)\\
  (1|23|45) & \varrho(12|34|5) + \varrho(1|2|34|5) + 
              \varrho(12|3|4|5) + \varrho(\pmin) \; \\
  (12|3|45) & \varrho(1|234|5) + \varrho(1|2|34|5) + 
              \varrho(1|23|4|5) + \varrho(\pmin) \; \\
  (12|34|5) & \varrho(1|23|45) + \varrho(1|23|4|5) + 
              \varrho(1|2|3|45) + \varrho(\pmin) \;
  \end{array}
\end{equation}
In particular, we get $\psi = \chi$ once again for a few more cases
than those covered in Lemma~\ref{lem:psi-is-chi}, because no
conditions emerge for the rates $\varrho(1|2|345)$, $\varrho(12|3|45)$
and $\varrho(123|4|5)$.

\begin{coro}\label{coro:all-clear-five}
  In the case of the lattice\/ $\II (\{ 1,2,3,4,5\})$, we get\/ $\psi
  (\cA) = \chi (\cA)$ for\/ \emph{all} $\cA$ precisely when the eight
  recombination rates vanish that appear in
  Eq.~\eqref{eq:rho-tab}. Then, one also has\/ $\theta = \mu$ together
  with\/ $\eta = \zeta$, and the linear solution formula applies to\/
  \emph{all} choices of the remaining recombination rates.  \qed
\end{coro}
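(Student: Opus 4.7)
The plan is to split the statement into two parts: first, establish the equivalence between $\psi = \chi$ on all of $\II (S)$ and the vanishing of the eight listed rates; second, under this condition, show that $\theta = \mu$, $\eta = \zeta$, and that the linear formula applies for all remaining choices of the rates.

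For the equivalence, I would first observe, via Lemma~\ref{lem:psi-chi-gen}, that $\psi (\cA) - \chi (\cA) = 0$ automatically whenever $\cA$ has at most one non-singleton part, since then no $\cB \in \II (S)$ can split two parts of $\cA$. The only interval partitions of $S = \{1,2,3,4,5\}$ with at least two non-singleton parts are precisely the five rows of Eq.~\eqref{eq:rho-tab}. The `if' direction is then immediate: vanishing of the eight rates forces each row to vanish. The converse follows from non-negativity: if $\psi = \chi$ throughout, each of the five non-negative sums is zero, so each of the eight distinct summands must vanish.

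For the second part, I would mirror the inductive argument of Proposition~\ref{prop:reduction-to-linear}. The crucial step is to show that the hypothesis propagates to all non-empty $U \nts \subsetneq S$, i.e.\ that $\psi^{U}\! = \chi^{U}$ on $\II (U)$. For $\lvert U \rvert \leqslant 3$ this is automatic. For $\lvert U \rvert = 4$, Corollary~\ref{coro:all-clear} applied to $U$ reduces the condition to the vanishing of two induced rates, namely $\varrho^{U}\! (\pmin)$ and the induced rate on the unique H-shape partition $(u_1 \mid u_2 u_3 \mid u_4)$, with $u_1 < u_2 < u_3 < u_4$ the elements of $U$. Using the marginalisation \eqref{eq:rho-sub} and a short case check over the five four-element subsets $U$, one verifies that each such induced rate is a sum of original rates drawn exclusively from our eight. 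With $\psi^{U}\! = \chi^{U}$ thus secured on all subsystems, the induction carries through verbatim: applying Fact~\ref{fact:product} to the recursion \eqref{eq:rec}, the sum over $\cC$ factors out $\mu^{S}\nts (\cA, \cB)$ and, via the first identity of Fact~\ref{fact:chi-diff}, collapses the remaining denominator. The case $\cB = \pmax$ then follows from the M\"obius inversion formula, yielding $\theta = \mu$ and, by inversion in the incidence algebra, $\eta = \zeta$.

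The main obstacle is the combinatorial bookkeeping in the propagation step: each of the five four-element subsets $U \subset S$ has to be inspected to confirm that its two bad induced rates involve no original rates outside our eight. Since only the single element of $S \setminus U$ can be re-assigned, and contiguity forces it into a neighbouring block or a singleton, the verification is finite and routine, but must be carried out to legitimise the downstream induction.
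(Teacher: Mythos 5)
Your proposal is correct and follows essentially the argument the paper intends: the equivalence is read off from the table in Eq.~\eqref{eq:rho-tab} together with the non-negativity of the rates, and the second claim is the induction of Proposition~\ref{prop:reduction-to-linear} rerun under the weaker hypothesis $\psi=\chi$. The one step you add beyond what the paper makes explicit --- verifying that the eight vanishing rates force $\varrho^{U}\!(\pmin)=\varrho^{U}\!(u_1|u_2u_3|u_4)=0$ on the four-element subsets, so that Corollary~\ref{coro:all-clear} propagates the hypothesis downward --- is exactly the right point to check (and note that the recursion \eqref{eq:rec} only ever invokes contiguous parts, so in fact only $U=\{1,2,3,4\}$ and $U=\{2,3,4,5\}$ need inspection).
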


\begin{table}[ht]
\begin{footnotesize}
\begin{tabular}{c| c @{\;\;} c @{\:\:} c @{\;\:} c @{\;\:} 
   c @{\;\:} c @{\:\:} c @{\:} c @{\:\:} c @{\:\:} 
   c @{\:\:} c @{\;\:} c @{\:\,} c @{\:\:} c @{\:\:} c @{\:\:} c}
  & \rotatebox[origin=c]{90}{$\;(1|2|3|4|5)\;$}
  & \rotatebox[origin=c]{90}{$(12|3|4|5)$}
  & \rotatebox[origin=c]{90}{$(1|23|4|5)$} 
  & \rotatebox[origin=c]{90}{$(1|2|34|5)$}
  & \rotatebox[origin=c]{90}{$(1|2|3|45)$}
  & \rotatebox[origin=c]{90}{$(123|4|5)$} 
  & \rotatebox[origin=c]{90}{$(12|34|5)$} 
  & \rotatebox[origin=c]{90}{$(12|3|45)$}
  & \rotatebox[origin=c]{90}{$(1|234|5)$}
  & \rotatebox[origin=c]{90}{$(1|23|45)$}
  & \rotatebox[origin=c]{90}{$(1|2|345)$} 
  & \rotatebox[origin=c]{90}{$(1234|5)$} 
  & \rotatebox[origin=c]{90}{$(123|45)$}
  & \rotatebox[origin=c]{90}{$(12|345)$}
  & \rotatebox[origin=c]{90}{$(1|2345)$} 
  & \rotatebox[origin=c]{90}{$(12345)$} \\
\hline
$(1|2|3|4|5)^{\vphantom{\hat{I}}}$
   & ${\, 1\,}$ & $-1$ & $-1$ & $-1$ & $-1$ & 1 & $x^{\pa}_{1}$ & $x^{\pa}_{2}$ 
   & 1 & $x^{\pa}_{3}$ & 1 & $-x^{\pa}_{1}$  & $-x^{\pa}_{4}$  & $-x^{\pa}_{5}$  
   & $-x^{\pa}_{3}$ & $x^{\pa}_{5} \!+\! x^{\pa}_{4} \!-\! x^{\pa}_{2}$ \\
$(12|3|4|5)$
   & 0 & 1 & 0 & 0 & 0 & $-1$ & $-x^{\pa}_{1}$ & $-x^{\pa}_{2}$ 
   & 0 & 0 & 0 & $x^{\pa}_{1}$  & $x^{\pa}_{4}$  & $x^{\pa}_{5}$  & 0 
   & $x^{\pa}_{2} \!-\! x^{\pa}_{4} \!-\! x^{\pa}_{5}$ \\
$(1|23|4|5)$
   & 0 & 0 & 1 & 0 & 0 & $-1$ & 0 & 0 & $-1$ & $-x^{\pa}_{3}$ & 0 & 1 
   & $x^{\pa}_{4}$  & 0 & $x^{\pa}_{3}$   & $-x^{\pa}_{4}$ \\
$(1|2|34|5)$
   & 0 & 0 & 0 & 1 & 0 & 0 & $-x^{\pa}_{1}$ & 0 & $-1$ & 0 & $-1$ 
   & $x^{\pa}_{1}$ & 0 & $x^{\pa}_{5}$  & 1 & $-x^{\pa}_{5}$ \\
$(1|2|3|45)$
   & 0 &  0 &  0 &  0 &  1 & 0 & 0 & $-x^{\pa}_{2}$ & 0 
   & $-x^{\pa}_{3}$ & $-1$ & 0 & $x^{\pa}_{4}$ & $x^{\pa}_{5}$  
   & $x^{\pa}_{3}$ & $x^{\pa}_{2} \!-\! x^{\pa}_{4} \!-\! x^{\pa}_{5}$ \\
$(123|4|5)$
   & 0 & 0 & 0 & 0 & 0 & 1 & 0 & 0 & 0 & 0 & 0 & $-1$
   & $-x^{\pa}_{4}$ & 0 & 0 & $x^{\pa}_{4}$  \\
$(12|34|5)$
   & 0 & 0 & 0 & 0 & 0 & 0 &  $x^{\pa}_{1}$ & 0 & 0 & 0 & 0 
   & $-x^{\pa}_{1}$ & 0 & $-x^{\pa}_{5}$ & 0 & $x^{\pa}_{5}$ \\
$(12|3|45)$
   & 0 & 0 & 0 & 0 & 0 & 0 & 0 & $x^{\pa}_{2}$ & 0 & 0 & 0 & 0 
   & $-x^{\pa}_{4}$ & $-x^{\pa}_{5}$ & 0 
   & $x^{\pa}_{5} \!+\! x^{\pa}_{4} \!-\! x^{\pa}_{2}$ \\
$(1|234|5)$
   & 0 & 0 & 0 & 0 & 0 & 0 & 0 & 0 & 1 & 0 & 0 & $-1$ 
   & 0 & 0 & $-1$ & 1 \\
$(1|23|45)$
   & 0 & 0 & 0 & 0 & 0 & 0 & 0 &  0 & 0 & $x^{\pa}_{3}$ 
   & 0 & 0 & $-x^{\pa}_{4}$ & 0 & $-x^{\pa}_{3}$ & $x^{\pa}_{4}$ \\
$(1|2|345)$
   & 0 & 0 & 0 & 0 & 0 & 0 & 0 & 0 & 0 & 0 & 1 & 0 
   & 0 & $-x^{\pa}_{5}$ & $-1$ & $x^{\pa}_{5}$ \\
$(1234|5)$
   & 0 & 0 & 0 & 0 & 0 & 0 & 0 & 0 & 0 & 0 & 0 & 1 & 0 & 0 & 0 & $-1$ \\
$(123|45)$
   & 0 & 0 & 0 & 0 & 0 & 0 & 0 & 0 & 0 & 0 & 0 & 0 & $x^{\pa}_{4}$  
   & 0 & 0 & $-x^{\pa}_{4}$ \\
$(12|345)$
   & 0 & 0 & 0 & 0 & 0 & 0 & 0 & 0 & 0 & 0 & 0 & 0 & 0 
   & $x^{\pa}_{5}$ & 0 & $-x^{\pa}_{5}$ \\
$(1|2345)$
   & 0 & 0 & 0 & 0 & 0 & 0 & 0 & 0 & 0 & 0 & 0 & 0 & 0 & 0 & 1 & $-1$ \\
$(12345)$
   & 0 & 0 & 0 & 0 & 0 & 0 & 0 & 0 & 0 & 0 & 0 & 0 & 0 & 0 & 0 & 1
\end{tabular}\bigskip

\caption{The $\theta$-matrix for the interval partitions of five 
sites.}\label{tab:matrix}
\end{footnotesize}
\end{table}

The recursive formula for $\theta$ leads to the matrix shown in
Table~\ref{tab:matrix}. Here, we used the shorthands
\[
   x^{\pa}_{1} = \xi (12|34|5)\, , \;
   x^{\pa}_{2} = \xi (12|3|45)\, , \;
   x^{\pa}_{3} = \xi (1|23|45)\, , \;
   x^{\pa}_{4} = \xi (123|45)\, 
   \; \text{and} \;
   x^{\pa}_{5} = \xi (12|345)\ts ,
\]
where $\xi (\cA) := \varrho (\cA') / \bigl(\psi (\pmax|^{}_{\supp
  (\cA')}) - \psi(\cA')\bigr)$ with $\cA'$ being obtained from $\cA$
by removing all singleton parts, so $(12|34|5)' = (12|34)$ and so on.
In these expressions, the rates on subsystems are the induced rates
according to Eq.~\eqref{eq:rho-sub}. It is a somewhat surprising
feature that the final expressions take a relatively simple,
systematic form only after exploiting the recursive structure in this
way. \smallskip

If $x^{\pa}_{1} \cdot \ldots \cdot x^{\pa}_{5} \ne 0$, the function
$\theta$ is invertible, and its inverse $\eta$ is given by
\begin{footnotesize}
\[
\begin{array}{c|c @{\:\:} c @{\:\:} c @{\:\:} 
   c @{\:\:} c @{\;\:} c @{\;\,} c@{\;\,} c @{\;\:} c @{\;\:} 
   c @{\;\,} c @{\;\,} c @{\;\,} c @{\:\,} c @{\;\,} c @{\;\:} c}
  & \rotatebox[origin=c]{90}{$\,(1|2|3|4|5)\,$}
  & \rotatebox[origin=c]{90}{$(12|3|4|5)$}
  & \rotatebox[origin=c]{90}{$(1|23|4|5)$} 
  & \rotatebox[origin=c]{90}{$(1|2|34|5)$}
  & \rotatebox[origin=c]{90}{$(1|2|3|45)$}
  & \rotatebox[origin=c]{90}{$(123|4|5)$} 
  & \rotatebox[origin=c]{90}{$(12|34|5)$} 
  & \rotatebox[origin=c]{90}{$(12|3|45)$}
  & \rotatebox[origin=c]{90}{$(1|234|5)$}
  & \rotatebox[origin=c]{90}{$(1|23|45)$}
  & \rotatebox[origin=c]{90}{$(1|2|345)$} 
  & \rotatebox[origin=c]{90}{$(1234|5)$} 
  & \rotatebox[origin=c]{90}{$(123|45)$}
  & \rotatebox[origin=c]{90}{$(12|345)$}
  & \rotatebox[origin=c]{90}{$(1|2345)$} 
  & \rotatebox[origin=c]{90}{$(12345)$} \\
\hline
(1|2|3|4|5)^{\vphantom{\hat{I}}}
           & 1 & 1 & 1 & 1 & 1 & 1 & 1 & 1 & 1 & 1 & 1 & 1 & 1 & 1 & 1 & 1 \\
(12|3|4|5) & 0 & 1 & 0 & 0 & 0 & 1 & 1 & 1 & 0 & 0 & 0 & 1 & 1 & 1 & 0 & 1 \\
(1|23|4|5) & 0 & 0 & 1 & 0 & 0 & 1 & 0 & 0 & 1 & 1 & 0 & 1 & 1 & 0 & 1 & 1 \\
(1|2|34|5) & 0 & 0 & 0 & 1 & 0 & 0 & 1 & 0 & 1 & 0 & 1 & 1 & 0 & 1 & 1 & 1 \\
(1|2|3|45) & 0 & 0 & 0 & 0 & 1 & 0 & 0 & 1 & 0 & 1 & 1 & 0 & 1 & 1 & 1 & 1 \\
(123|4|5)  & 0 & 0 & 0 & 0 & 0 & 1 & 0 & 0 & 0 & 0 & 0 & 1 & 1 & 0 & 0 & 1 \\
(12|34|5)  & 0 & 0 & 0 & 0 & 0 & 0 & \frac{1}{x^{\pa}_{1}} & 0 & 0 & 0 
           & 0 & 1 & 0 & \frac{1}{x^{\pa}_{1}} & 0 & 1\\
(12|3|45)  & 0 & 0 & 0 & 0 & 0 & 0 & 0 & \frac{1}{x^{\pa}_{2}} & 0 & 0 
           & 0 & 0 & \frac{1}{x^{\pa}_{2}} & \frac{1}{x^{\pa}_{2}} & 0 & 1  \\
(1|234|5)  & 0 & 0 & 0 & 0 & 0 & 0 & 0 & 0 & 1 & 0 & 0 & 1 & 0 & 0 & 1 & 1 \\
(1|23|45)  & 0 & 0 & 0 & 0 & 0 & 0 & 0 & 0 & 0 & \frac{1}{x^{\pa}_{3}} 
           & 0 & 0 & \frac{1}{x^{\pa}_{3}} & 0 & 1 & 1 \\
(1|2|345)  & 0 & 0 & 0 & 0 & 0 & 0 & 0 & 0 & 0 & 0 & 1 & 0 & 0 & 1 & 1 & 1 \\
(1234|5)   & 0 & 0 & 0 & 0 & 0 & 0 & 0 & 0 & 0 & 0 & 0 & 1 & 0 & 0 & 0 & 1 \\
(123|45)   & 0 & 0 & 0 & 0 & 0 & 0 & 0 & 0 & 0 & 0 & 0 & 0 
           & \frac{1}{x^{\pa}_{4}} & 0 & 0 & 1 \\
(12|345)   & 0 & 0 & 0 & 0 & 0 & 0 & 0 & 0 & 0 & 0 & 0 & 0 
           & 0 & \frac{1}{x^{\pa}_{5}} & 0 & 1  \\
(1|2345)   & 0 & 0 & 0 & 0 & 0 & 0 & 0 & 0 & 0 & 0 & 0 & 0 & 0 & 0 & 1 & 1 \\
(12345)    & 0 & 0 & 0 & 0 & 0 & 0 & 0 & 0 & 0 & 0 & 0 & 0 & 0 & 0 & 0 & 1
\end{array}
\]
\end{footnotesize}

While the $\theta$-coefficients suffice to write down the solution in
the generic case, where we have $\psi^{U}\! (\pmax)\ne\psi^{U}\!
(\cA)$ for all $\cA\in\II (U)$ and all non-empty $U\nts\subseteq S$,
the degenerate cases require some care. First, for $\cA\in\{ \pmax,
(1|2345), (1234|5), (1|234|5)\}$, the solution is always given by
Eq.~\eqref{eq:simple-sol}. These cases correspond to the four rows in
the $\theta$-matrix of Table~\ref{tab:matrix} with constant entries,
which agree with the values of the M\"{o}bius function. For $\cA =
(12|345)$ and $\cA = (123|45)$, the general solution reads
\[
     a^{\pa}_{t} (\cA) \, = \, \varrho (\cA) \, 
     E^{\pa}_{0} (\psi (\pmax), \psi (\cA); t) \, = \,
     \ee^{-\psi (\pmax) \ts t} - \ee^{-\psi (\cA) \ts t}
     + \bigl(\psi (\cA) - \chi (\cA) \bigr)
     E^{\pa}_{0} (\psi (\pmax), \psi (\cA); t) 
\]
with the function $E^{\pa}_{0}$ from Eq.~\eqref{eq:E-fun}. Depending
on the values of the recombination rates, one can thus pick up a term
of the form $t \ee^{-\psi(\cA) \ts t}$, as in the case of four
sites. Next, one finds
\[
\begin{split}
    a^{\pa}_{t} (1|2|345) \, & = \, \ee^{-\psi (1|2|345) \ts t}
      - \ee^{-\psi(1|2345) \ts t} - \varrho(12|345) \, 
       E^{\pa}_{0} (\psi(\pmax), \psi (12|345) ; t) \ts , \\
    a^{\pa}_{t} (123|4|5) \, & = \, \ee^{-\psi (123|4|5) \ts t}
      - \ee^{-\psi(1234|5) \ts t} - \varrho(123|45) \, 
       E^{\pa}_{0} (\psi(\pmax), \psi (123|45) ; t) \ts ,
\end{split}    
\]
where the last term can be split as in the previous
case. Similarly, one gets
\[
\begin{split}
    a^{\pa}_{t} (12|3|45) \, = \, & \; \varrho^{\ts\prime} (12|45) \,
       E^{\pa}_{0} (\psi (\pmax), \psi (12|3|45); t )  \\
       & - \varrho (123|45) \,
       E^{\pa}_{0} (\psi (\pmax), \psi (123|45); t ) -
       \varrho (12|345) \,
       E^{\pa}_{0} (\psi (\pmax), \psi (12|345); t ) 
\end{split}
\]
with $\varrho^{\ts\prime} (12|45) = \varrho (12|3|45) + \varrho (123|45)
+ \varrho (12|345) $.  Here and below, the ${}^{\prime}$ indicates an
induced quantity on a subsystem (here with four sites).

All remaining partitions are true refinements of $(1|2345)$ or
$(1234|5)$, wherefore we may pick up a term of the form $t
\ee^{-\alpha t}$ already in the ODE, depending on the choice of
parameters. To deal with this complication, let us first state a
simple variant of the results in \cite[Appendix]{main}.

\begin{fact}\label{fact:ODE}
  Let\/ $\rho$ as well as\/ $\sigma^{\pa}_{1}, \dots ,\sigma^{\pa}_{m}$ 
  and\/ $\sigma^{\ts\prime}_{1},\dots ,\sigma^{\ts\prime}_{n}$ be arbitrary
  non-negative numbers, and\/ $\varphi$ a continuous function
  on\/ $\RR^{\pa}_{\geqslant 0}$ such that\/ $\ee^{\rho\ts t} \ts
  \varphi (t)$ is integrable. Then, the Cauchy problem defined by the
  ODE
\[
     \dot{g} \, = \, - \rho\, g \, + \, \varphi (t) \, +
     \sum_{i=1}^{m} \varepsilon^{\pa}_{i} \,
     (\rho - \sigma^{\pa}_{\nts i})\, \ee^{-\sigma^{\pa}_{i} t} \, +
     \sum_{j=1}^{n} \varepsilon^{\ts\prime}_{\nts j} \,
     \ee^{-\sigma^{\ts\prime}_{\! j}\ts t}
\]  
together with the initial condition\/ $g (0) = g^{\pa}_{0}$ and\/
$\varepsilon^{\pa}_{i} , \varepsilon^{\ts\prime}_{\nts j} \in \RR$ has the
unique solution
\[
      g \, = \, g^{\pa}_{0}\, \ee^{-\rho \ts\ts t} +
      \ee^{-\rho \ts\ts t} \int_{0}^{t} \ee^{\ts \rho \ts s} \ts
      \varphi (s) \dd s \,  + \sum_{i=1}^{m} \varepsilon^{\pa}_{i}
      \bigl( \ee^{-\sigma^{\pa}_{\nts i}  t} - \ee^{-\rho \ts\ts t}\bigr) +
      \sum_{j=1}^{n} \varepsilon^{\ts\prime}_{\nts j} \,
      E^{\pa}_{0} (\rho, \sigma^{\ts\prime}_{\! j}; t) \ts ,
\]  
  which holds for all\/ $t\geqslant 0$.   \qed
\end{fact}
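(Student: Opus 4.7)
The plan is to treat this as a scalar linear first-order ODE with forcing, and solve it by the standard integrating factor $\ee^{\rho t}$. Since the coefficient of $g$ on the right-hand side is the constant $-\rho$, multiplying through by $\ee^{\rho t}$ collapses $\dot g + \rho g$ into $\frac{\dd}{\dd t}\bigl(\ee^{\rho t} g\bigr)$, and reduces the problem to pure quadrature:
\[
   \frac{\dd}{\dd t}\bigl(\ee^{\rho t} g\bigr) \, = \, \ee^{\rho t}\ts\varphi(t)
   \, + \sum_{i=1}^{m} \varepsilon^{\pa}_{i} (\rho - \sigma^{\pa}_{i})\,
         \ee^{(\rho - \sigma^{\pa}_{i}) t}
   \, + \sum_{j=1}^{n} \varepsilon^{\ts\prime}_{\nts j}\,
         \ee^{(\rho - \sigma^{\ts\prime}_{\! j}) t} .
\]
I would then integrate from $0$ to $t$, use the initial condition $g(0) = g^{\pa}_{0}$, and finally divide by $\ee^{\rho t}$ to return to $g$ itself.

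The three contributions handle as follows. The $\varphi$-term becomes $\ee^{-\rho t}\int_{0}^{t} \ee^{\rho s}\varphi(s)\dd s$ verbatim; integrability of $\ee^{\rho t}\varphi$ on $\RR_{\geqslant 0}$ ensures this integral makes sense and is differentiable in $t$. Each term in the first sum integrates to $\varepsilon^{\pa}_{i}\bigl(\ee^{(\rho - \sigma^{\pa}_{i}) t} - 1\bigr)$, which after multiplication by $\ee^{-\rho t}$ gives precisely $\varepsilon^{\pa}_{i}\bigl(\ee^{-\sigma^{\pa}_{i} t} - \ee^{-\rho t}\bigr)$; note that no case split is needed here, because the factor $(\rho - \sigma^{\pa}_{i})$ is explicitly cancelled in the integration even when $\sigma^{\pa}_{i} = \rho$ (the integrand simply becomes the zero function).

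The one place where genuine care is needed is the second sum, where no such cancellation is built in: this is the step that forces the piecewise definition of $E^{\pa}_{0}$ to enter. If $\sigma^{\ts\prime}_{\!j} \ne \rho$, the integral evaluates to $\frac{1}{\rho - \sigma^{\ts\prime}_{\! j}}\bigl(\ee^{(\rho - \sigma^{\ts\prime}_{\! j}) t} - 1\bigr)$, and multiplication by $\ee^{-\rho t}$ reproduces the off-diagonal branch of $E^{\pa}_{0} (\rho, \sigma^{\ts\prime}_{\! j}; t)$ from Eq.~\eqref{eq:E-fun}. If $\sigma^{\ts\prime}_{\!j} = \rho$, the exponent vanishes and direct integration yields $t$, whence multiplication by $\ee^{-\rho t}$ yields $t\ts \ee^{-\rho t}$, which is exactly the diagonal branch of $E^{\pa}_{0}$. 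In both cases one obtains the single unified expression $\varepsilon^{\ts\prime}_{\nts j}\, E^{\pa}_{0} (\rho, \sigma^{\ts\prime}_{\! j}; t)$, so the two cases merge into one formula. Assembling the three pieces and adding the homogeneous contribution $g^{\pa}_{0}\ee^{-\rho t}$ reproduces the claimed closed form.

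Uniqueness is not really an obstacle: the right-hand side is continuous in $t$ and affine (hence globally Lipschitz) in $g$, so Picard--Lindel\"of applies on all of $\RR_{\geqslant 0}$; the function constructed above clearly satisfies the initial condition and the ODE by the fundamental theorem of calculus, so it is \emph{the} solution. The only genuinely delicate point, therefore, is bookkeeping around the degenerate case $\sigma^{\ts\prime}_{\! j} = \rho$, and the virtue of the statement is precisely that $E^{\pa}_{0}$ packages this into a single symbol, which is why the degenerate and generic branches of the recombination solution in Sections~\ref{sec:examples} and \ref{sec:five-sites} can be treated uniformly.
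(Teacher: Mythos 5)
Your proof is correct and is exactly the standard integrating-factor/variation-of-constants computation that the paper leaves implicit (the Fact is stated with a reference to the Appendix of \cite{main} rather than proved in the text); the case analysis for $\sigma^{\ts\prime}_{\! j}=\rho$ versus $\sigma^{\ts\prime}_{\! j}\ne\rho$, and the observation that no case split is needed for the first sum because of the built-in factor $(\rho-\sigma^{\pa}_{i})$, are handled correctly. Nothing to add.
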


Let us now consider the case $\cA = (12|34|5)$ in more detail.
According to Eq.~\eqref{eq:a-ODE}, the ODE we have to solve here,
after some calculations, reads
\[
\begin{split}
    \dot{a}^{\pa}_{t} (\cA) \, = \, & - \, 
     \psi (\pmax) \, a^{\pa}_{t} (\cA)
     \; + \!\! \sum_{\cA \preccurlyeq \udo{\cB} \prec \pmax} \!
    \mu(\cA,\cB) \bigl( \psi (\pmax) - \psi (\cB)\bigr)
     \ee^{-\psi (\cB) \ts t}  \\
     & + \, c \,  E^{\pa}_{0} (\psi (1234|5), \psi (\cA); t) \, +
     \!\! \sum_{\cA \preccurlyeq \udo{\cB} \prec \pmax} \!
    \mu(\cA,\cB) \bigl( \psi (\cB) - \chi (\cB)\bigr)
     \ee^{-\psi (\cB) \ts t}
\end{split}     
\]
where $c=\varrho (1234|5) \bigl(\varrho^{\ts\prime} (1|2|3|4) +
\varrho^{\ts\prime} (1|23|4) \bigr)$.  This is of the type covered by
Fact~\ref{fact:ODE}, with $\varphi (t) = c \, E^{\pa}_{0} (\psi
(1234|5), \psi (\cA); t)$. Note that the two rates here generically
differ from $\rho = \psi (\pmax)$.  Depending on the choice of the
recombination rates, the solution can be a sum of exponentials (with
coefficients given by $\theta$), which is the generic case.  However,
it can also contain a contribution of the form $t \ts \ee^{-\psi (\cB)
  \ts t}$ (when $\chi (\cB) \ne \psi (\cB) = \psi (\pmax)$ occurs) or
even of the form $t^{2}\ts \ee^{-\psi (\cA) \ts t}$, which happens for
the double degeneracy that $\psi (\cA) = \psi (1234|5)$ together with
$\psi(\cA)= \psi (\pmax)$, as a consequence of \cite[Lemma~9]{main}.
We leave it to the reader to analyse the details of the various cases
that are possible here.  Clearly, the treatment of the partition
$(1|23|45)$ is completely analogous, so that we have covered all
partitions with up to three parts.

The remaining five cases can be analysed in the same way. Doing so,
one realises that the general form of the ODE for $a^{\pa}_{t} (\cA)$
with arbitrary $\cA \in \II (S)$ reads
\begin{equation}\label{eq:ODE-5}
\begin{split}
  \dot{a}^{\pa}_{t} (\cA) \, = \, & - \psi (\pmax)\, a^{\pa}_{t} (\cA)
   + \! \sum_{\cA \preccurlyeq\udo{\cB}\prec \pmax} \!
   \mu (\cA, \cB) \bigl[ \bigl( \psi (\pmax) - \psi (\cB)\bigr)
   + \bigl(\psi (\cB) - \chi (\cB) \bigr)\bigr]
     \ee^{-\psi (\cB) \ts t} \\
  & + \mu \bigl(\cA, (12|34|5) \bigr)\, c \,
      E^{\pa}_{0} (\psi(1234|5), \psi(12|34|5); t) \\[1mm]
  &  + \mu \bigl(\cA, (1|23|45) \bigr)\, \tilde{c} \,
      E^{\pa}_{0} (\psi(1|2345), \psi(1|23|45); t) \ts ,
\end{split}
\end{equation}
with $c = \varrho (1234|5) \bigl(\varrho^{\ts\prime} (1|2|3|4) +
\varrho^{\ts\prime} (1|23|4)\bigr)$ and $\tilde{c} = \varrho (1|2345)
\bigl(\varrho^{\ts\prime} (2|3|4|5) + \varrho^{\ts\prime}
(2|34|5)\bigr)$. The solution is fully covered by Fact~\ref{fact:ODE},
and the necessary case distinctions should be clear. In particular,
the solutions will contain simple exponentials, contributions of type
$E^{\pa}_{0}$ (whenever $\psi (\cB) \ne \chi (\cB)$ in the sum, which
leads to terms of the form $t \ts \ee^{-\alpha\ts t}$), but possibly
also integrals as in the previous two cases (with the same
consequences, namely possible terms of the form $t^{2} \ee^{-\alpha\ts
  t}$ in the case of double degeneracies). Let us summarise this as
follows.

\begin{coro}\label{coro:sol-5-sites}
  The solution of the general recombination equation
  \eqref{eq:main-ODE} on\/ $\cM (X)$, for the set\/ $S=\{ 1,2,3,4,5
  \}$ with the lattice\/ $\VV = \II (S)$ and with a probability
  measure\/ $\omega^{\pa}_{0} \in \cP (X)$ as initial condition, is
  given by
\[
      \omega^{}_{t} \, =
      \sum_{\cA\in\II (S)} a^{\pa}_{t} (\cA) \, 
      R^{\pa}_{\nts\cA} (\omega^{\pa}_{0}) \ts ,
\]      
where the convex coefficients\/ $a^{\pa}_{t} (\cA)$ are the unique
solution of the Cauchy problem defined by Eq.~\eqref{eq:ODE-5}
together with the initial condition\/ $a^{\pa}_{0} (\cA) = \delta
(\cA, \pmax)$, as given in Fact~$\ref{fact:ODE}$.  \qed
\end{coro}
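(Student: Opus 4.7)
The plan is to obtain the corollary by chaining three ingredients that are already in place. First, I would apply Theorem~\ref{thm:main} to the present lattice $\VV = \II (S)$ with $\lvert S \rvert = 5$: this delivers, for the initial condition $\omega^{\pa}_{0} \in \cP (X)$, the ansatz form
\[
    \omega^{\pa}_{t} \, = \sum_{\cA\in\II (S)} a^{\pa}_{t} (\cA) \,
    R^{\pa}_{\!\cA} (\omega^{\pa}_{0}) \ts ,
\]
together with the assertion that the $a^{\pa}_{t} (\cA)$ form probability vectors and are the unique solution of the nonlinear ODE system \eqref{eq:a-ODE} with initial condition $a^{\pa}_{0} (\cA) = \delta (\cA, \pmax)$. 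The remaining task is thus to show that, for $\VV = \II (S)$ and $\lvert S \rvert = 5$, the coefficient ODE \eqref{eq:a-ODE} can be rewritten in the closed form \eqref{eq:ODE-5}, and then to invoke Fact~\ref{fact:ODE} to produce the explicit solution.

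Second, to reduce \eqref{eq:a-ODE} to \eqref{eq:ODE-5}, I would look at the nonlinear term on the right-hand side of \eqref{eq:a-ODE}, namely
\[
    \sum_{\cA \preccurlyeq \udo{\cB} \prec \pmax} \varrho (\cB)
    \prod_{i=1}^{\lvert \cB \rvert} a^{B_{i}}_{t}
    (\cA |^{\pa}_{B_{i}}) \ts .
\]
Every factor $a^{B_i}_{t}$ here is the coefficient function for a proper subsystem with at most four sites, and such coefficients have already been determined in closed form in Section~\ref{sec:examples}: they are given by the simple exponential formula \eqref{eq:simple-sol} for subsystems with up to three sites, and by Eq.~\eqref{eq:sol-4-sites} for the four-site subsystems induced by $(1234|5)$ and $(1|2345)$. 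Inserting these explicit formulae, multiplying out the products of exponentials (using marginalisation consistency and the additivity $\psi (\cB) = \sum_{i} \psi^{B_i} (\pmax|^{\pa}_{B_i})$), and regrouping by target decay rate $\psi (\cB)$, would collapse the double sum into the combination of exponentials and $E^{\pa}_{0}$-terms displayed in \eqref{eq:ODE-5}. The Möbius-function coefficients appear naturally here by Fact~\ref{fact:product} and the upper Möbius summation, while the splitting of each exponential coefficient into the two pieces $\psi (\pmax) - \psi (\cB)$ and $\psi (\cB) - \chi (\cB)$ is forced by combining Fact~\ref{fact:chi-diff} with Lemma~\ref{lem:psi-chi-gen}, exactly as in the derivation of Eq.~\eqref{eq:sol-4-sites}.

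Third, once \eqref{eq:ODE-5} is established, the Cauchy problem with $a^{\pa}_{0} (\cA) = \delta (\cA, \pmax)$ is of the exact form treated by Fact~\ref{fact:ODE} (with $\rho = \psi (\pmax)$, with the sum over $\cB$ providing both the $\sigma^{\pa}_{i}$-terms and the $\sigma^{\ts\prime}_{\! j}$-terms via the split described above, and with $\varphi$ taken to be the continuous combination of the two $E^{\pa}_{0}$-contributions weighted by $c$ and $\tilde{c}\ts$). Uniqueness and the explicit integral representation given by Fact~\ref{fact:ODE} then complete the argument, and convexity of the coefficients is inherited from Theorem~\ref{thm:main}.

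The main obstacle is the middle step: careful bookkeeping of the subsystem contributions for the two partitions $(1234|5)$ and $(1|2345)$, whose induced four-site systems are precisely those in which the degeneracy flagged in Corollary~\ref{coro:all-clear} can occur. Tracking how the four-site $E^{\pa}_{0}$-terms survive the insertion and produce the constants $c = \varrho (1234|5) \bigl(\varrho^{\ts\prime} (1|2|3|4) + \varrho^{\ts\prime} (1|23|4)\bigr)$ and $\tilde{c} = \varrho (1|2345) \bigl(\varrho^{\ts\prime} (2|3|4|5) + \varrho^{\ts\prime} (2|34|5)\bigr)$, while simultaneously verifying that all other subsystem contributions collapse into the pure exponential sum over $\cA \preccurlyeq \cB \prec \pmax$, is the real combinatorial work; everything else is either a direct application of Theorem~\ref{thm:main} and Fact~\ref{fact:ODE} or a use of the Möbius identities already employed in Section~\ref{sec:examples}.
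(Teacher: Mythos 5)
Your proposal is correct and follows essentially the same route as the paper: the corollary is stated there as a summary of the preceding derivation, which likewise obtains Eq.~\eqref{eq:ODE-5} from Eq.~\eqref{eq:a-ODE} by inserting the known closed-form subsystem coefficients (at most four sites, with the $E^{\pa}_{0}$-terms entering precisely through the four-site subsystems of $(1234|5)$ and $(1|2345)$) and then invokes Fact~\ref{fact:ODE} for existence, uniqueness and the explicit form. Your explicit appeal to Theorem~\ref{thm:main} for the ansatz and the convexity of the coefficients is implicit in the paper but entirely consistent with it.
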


Beyond five sites, the complexity of the solution in the presence of
degeneracies increases. In particular, for $n$ sites, one can in
principle also obtain terms of the form $t^{m} \ee^{-\alpha t}$ for
any $0 \leqslant m \leqslant n-3$, though this requires an $m$-fold
degeneracy in analogy to the appearance of double degeneracies in our
above examples. Since the generic case without any of the `bad'
degeneracies most likely covers the practically relevant cases, we do
not want to expand on this issue any further. We shall come back to
it from a different point of view shortly.

\section{Further directions}\label{sec:outlook}

Let us return to the general case of $n$ sites, with an irreducible
and indecomposable partition lattice $\VV$.  At this stage, we know
that $\omega^{\pa}_{t} = \sum_{\cA \in \VV} a^{\pa}_{t} (\cA) \,
R^{\pa}_{\!\cA} (\omega^{\pa}_{0})$ solves the recombination ODE
\eqref{eq:main-ODE} when each $a^{\pa}_{t} (\cA)$ solves
Eq.~\eqref{eq:a-ODE}, both with the properly corresponding initial
conditions. In the generic case with $a^{\pa}_{0} (\cA) = \delta
(\cA,\pmax)$, this means $a^{\pa}_{t} (\cA) = \sum_{\cB\succcurlyeq
  \cA} \theta (\cA,\cB)\ts \ee^{-\psi (\cB)\ts t}$, where one also has
$\ee^{-\psi (\cB)\ts t} = \sum_{\cC\succcurlyeq \cB} \eta (\cB,\cC)\,
a^{\pa}_{t} (\cC)$.  This structure thus provides a \emph{mode
  decomposition} (also known as decomposition into linkage
disequilibria) $\omega^{\pa}_{t} = \sum_{\cA\in\VV} \nu^{\pa}_{t}
(\cA)$ with
\[
\begin{split}
   \nu^{\pa}_{t} (\cA) \, & = \,  \ee^{-\psi (\cA)\ts t}
   \sum_{\cC \preccurlyeq \cA} \theta (\cC,\cA) \,
   R^{\pa}_{\cC} (\omega^{\pa}_{0} ) \\
   & = \sum_{\cB\succcurlyeq \cA} \eta (\cA,\cB) \,
   a^{\pa}_{t} (\cB)  \sum_{\cC \preccurlyeq \cA} \theta (\cC,\cA) \,
   R^{\pa}_{\cC} (\omega^{\pa}_{0} ) \ts ,
\end{split}
\]
where the (signed) measure $\nu^{\pa}_{t} (\cA)$ satisfies the
linear ODE
\[
     \dot{\nu}^{\pa}_{t} (\cA) \, = \,
     - \psi (\cA)\,  \nu^{\pa}_{t} (\cA)
\]
together with the initial condition $\nu^{\pa}_{0} (\cA) =
\sum_{\cC\preccurlyeq \cA} \theta (\cC,\cA) \, R^{\pa}_{\cC}
(\omega^{\pa}_{0} )$.

Since our general assumptions on the recombination rates (for $\VV$
being indecomposable and irreducible) imply that $\psi (\cA) >0$ for
all $\cA \ne \pmin$, we get the norm convergence
\[
    \omega^{\pa}_{t} \, \xrightarrow{\, t \to \infty \,} \,
    R^{\pa}_{\pmin} (\omega^{\pa}_{0} ) \ts ,
\]
with exponential decay of all modes $\nu^{\pa}_{0} (\cA)$ for $\cA \ne
\pmin$. For any $\omega^{\pa}_{0} \in \cP (X)$, there is thus the
\emph{unique} equilibrium, $R^{\pa}_{\pmin} (\omega^{\pa}_{0} )$, and
the convergence towards it is exponentially fast. In the degenerate
cases, as explained above, this situation may have to be modified to
include mode decay of the form $t^{m} \ee^{-\alpha\ts t}$ for some
$m$, where $m \leqslant n-3$ with $n$ the number of sites, but this
does not change the equilibrium. As the general structure should be
clear, we leave further details to the reader.

Let us return to the generic case that $\psi^{U}\! (\pmax) \ne
\psi^{U}\! (\cA)$ holds for all $\cA\in \VV (U)$ and all non-empty
$U\nts\subseteq S$. Here, we are in the regime of the recursive
formula \eqref{eq:rec} for $\theta^{S}$.  Though it seems difficult to
write down a closed formula for $\theta^{S}\!$, we can say more on the
diagonal elements. Define $\vartheta^{U}\! (\cA) = \theta^{U}\!
(\cA,\cA)$ and set $\vartheta^{\varnothing} (\varnothing)=1$.  This
way, $\vartheta$ is an element of the M\"{o}bius algebra, and
satisfies the recursion
\begin{equation}\label{eq:theta-rec}
    \vartheta^{U}\! (\cA) \, = \! \sum_{\cA\preccurlyeq \udo{\cB} \prec \pmax}
    \! \varrho^{U}\! (\cB) \, \prod_{i=1}^{\lvert \cB \rvert}
    \vartheta^{B_{i}} (\cA |^{\pa}_{B_{i}} )\ts ,
\end{equation}
which is an immediate consequence of Eq.~\eqref{eq:rec} and holds under
the non-degeneracy conditions mentioned above. As introduced earlier,
let $\cA^{\prime}$ be the partition that emerges from $\cA$ by
removing all singleton parts, which includes the limiting case
$\pmin^{\ts\prime} = \varnothing$.

\begin{prop}\label{prop:diag-reduction}
  Let\/ $S$ be a finite set and\/ $\VV$ an indecomposable and
  irreducible lattice of partitions of\/ $S$. For the generic choice
  of the recombination rates\/ $\varrho^{S}$, the recursively defined
  functions\/ $\vartheta^{U}\!\in\MM^{\pa}_{\RR} (\VV (U))$ introduced
  above satisfy the relations
\[
      \vartheta^{U}\! (\cA) \, = \, 
      \vartheta^{\supp (\widetilde{\cA}\, )} \bigl(\widetilde{\cA}\, \bigr)
\]    
for all\/ $\cA\in\VV (U)$ and all non-empty\/ $U\nts\subseteq S$.
Here, $\widetilde{A}$ is any partition that emerges from\/ $\cA$ by
the removal of an arbitrary number of singletons, including the case\/
$\widetilde{\cA}=\cA^{\prime}$.
\end{prop}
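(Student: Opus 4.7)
The plan is to proceed by induction on $|U|$ and reduce the claim to a single-singleton removal step. Since any $\widetilde{\cA}$ arises from $\cA$ by removing a sequence of singleton parts one at a time (with $\cA^{\ts\prime}$ being the most refined result of such removals), it suffices to prove the following: if $\{j\}$ is a singleton part of $\cA$, and we set $U' = U\setminus\{j\}$ and $\cA' = \cA\setminus\{\{j\}\}$, then $\vartheta^{U}\!(\cA) = \vartheta^{U'}\!(\cA')$. The base case $|U| = 1$ is immediate since then $\cA = \pmax$ is forced and $\vartheta^{U}\!(\pmax) = 1 = \vartheta^{\varnothing}(\varnothing)$.

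For the inductive step, apply the recursion \eqref{eq:theta-rec} to $\vartheta^{U}\!(\cA)$. Each summand is indexed by a partition $\cB \in \VV (U)$ with $\cA \preccurlyeq \cB \prec \pmax$, and $j$ lies in a unique part $B_k$ of $\cB$. Two cases arise: if $B_k = \{j\}$, then $\vartheta^{\{j\}}(\cA|^{\pa}_{B_k}) = 1$ is absorbed; if $|B_k| \geqslant 2$, then $\cA|^{\pa}_{B_k}$ still has $\{j\}$ as a singleton part, and since $|B_k| < |U|$, the inductive hypothesis gives $\vartheta^{B_k}(\cA|^{\pa}_{B_k}) = \vartheta^{B_k\setminus\{j\}}(\cA|^{\pa}_{B_k\setminus\{j\}})$. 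In both cases, the product $\prod_{i} \vartheta^{B_i}(\cA|^{\pa}_{B_i})$ is identified with the analogous product over the parts of $\cB' := \cB|^{\pa}_{U'}$ evaluated at $\cA'$.

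One now groups the summands by $\cB'$. The marginalization identity \eqref{eq:rho-sub} applied at the level of $U$ versus $U'$ gives $\varrho^{U'}\!(\cB') = \sum_{\cB \in \VV (U),\,\cB|^{\pa}_{U'} = \cB'} \varrho^{U}\!(\cB)$, which collapses each inner subsum. The result matches the recursion for $\vartheta^{U'}\!(\cA')$, except for a boundary discrepancy at $\cB' = \pmax|^{\pa}_{U'}$: the only $\cB \prec \pmax$ with $\cB|^{\pa}_{U'} = \pmax|^{\pa}_{U'}$ is $\cB = \{U', \{j\}\}$, contributing an extra term $\varrho^{U}\!(\{U', \{j\}\})\,\vartheta^{U'}\!(\cA')$.

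The main technical hurdle is precisely this boundary term, together with the need to reconcile it with the normalisation factor $\psi^{U}\!(\pmax) - \psi^{U}\!(\cA)$ implicit in the recursion (which arises from \eqref{eq:rec} upon setting $\cB = \cA$). The key identity that makes the algebra close is
\[
   \psi^{U}\!(\pmax) - \psi^{U}\!(\cA) \, = \, \bigl[\psi^{U'}\!(\pmax) - \psi^{U'}\!(\cA')\bigr] + \varrho^{U}\!(\{U', \{j\}\}) \ts ,
\]
whose first summand follows from \eqref{eq:psi-two} applied to $\cA$ (using $\psi^{\{j\}}(\pmax) = 0$), while the second summand arises exactly from the single extra partition appearing when marginalising $\psi^{U}\!(\pmax)$ to $\psi^{U'}\!(\pmax)$. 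With this identity, the boundary contribution cancels precisely against the denominator mismatch, and we conclude $\vartheta^{U}\!(\cA) = \vartheta^{U'}\!(\cA')$, completing the induction.
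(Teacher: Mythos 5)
Your proof is correct and follows essentially the same route as the paper's: both regroup the recursion sum for $\vartheta$ according to the restriction of the summation variable to the reduced support, collapse the fibres via marginalisation consistency, and absorb the extra boundary term coming from the coarsest partition into the shift $\psi^{U}\!(\pmax)-\psi^{U'}\!(\pmax)=\varrho^{U}\!\bigl(\{U',\{j\}\}\bigr)$ so that the factor $\psi^{U}\!(\pmax)-\psi^{U}\!(\cA)$ cancels. The only difference is that you peel off one singleton at a time whereas the paper removes all singletons in a single step by writing $\cA=\cD\sqcup\cE$ and restricting to $\supp(\cD)$ --- an organisational rather than a substantive difference.
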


\begin{proof}
  The claim is true for all $\cA\in\PP(S)$ with any $S$ of cardinality
  $\lvert S \rvert \leqslant 3$ as a consequence of $\vartheta^{S}
  (\cA) = \mu^{S} (\cA,\cA) = 1$ for all $\cA\in\PP(S)$ in this case,
  hence also for the sublattice $\VV$. The formula is also true for
  arbitrary $U \ne \varnothing$ when $\cA$ contains no singletons or when
  $\widetilde{\cA}=\cA$ (both sides are equal) or when $\cA = \pmin$
  together with $\widetilde{\cA}=\cA^{\prime}=\varnothing$ (where both
  sides evaluate to $1$).

  To continue the proof by induction in the number of elements of $S$
  respectively $U$, we may thus assume that $\cA = \cD \sqcup \cE$ is
  the join of a partition $\cD$, which may or may not contain one or
  more singletons, with another (non-empty) partition $\cE$ that
  \emph{only} contains singletons. With $D:=\supp (\cD)\ne
  \varnothing$, we can then calculate
\[
    \bigl( \psi^{S} (\pmax) -  \psi^{S} (\cA)\bigr)
    \ts\ts \vartheta^{S} (\cA) \,
     =  \!\sum_{\cA\preccurlyeq\udo{\cB}\prec\pmax}\! \varrho^{S} (\cB)
    \, \prod_{i=1}^{\lvert \cB \rvert} \vartheta^{B_{i}}
     (\cA |^{\pa}_{B_{i}}) \,
     = \, \sum_{\cF\succcurlyeq\cD} \, \sum_{\substack{\cC\ne\pmax \\
         \cC |^{\pa}_{D} = \cF}} \!\varrho^{S} (\cC)\,
         \prod_{i=1}^{\lvert \cC \rvert} \vartheta^{C_{i}}
     (\cA |^{\pa}_{C_{i}}) \ts .
\]
Using the induction hypothesis and the structure of $\cA$, the last
product can be simplified as
\[
    \prod_{i=1}^{\lvert \cC \rvert} \vartheta^{C_{i}} (\cA |^{\pa}_{C_{i}})
    \, = \, \prod_{j=1}^{\lvert \cF \rvert} \vartheta^{F_{j}}
    (\cD |^{\pa}_{F_{j}}) \ts ,
\]
which allows to evaluate the inner sum over $\cC$ as $\varrho^{D}
(\cF) - \varrho^{S} (\pmax) \, \delta (\cF, \pmax |^{\pa}_{D} )$.
This gives
\[
\begin{split}
    \bigl( \psi^{S} (\pmax) -  \psi^{S} (\cA)\bigr) 
          \ts \vartheta^{S} (\cA) \,
     & =  \ts\bigl( \varrho^{D} (\pmax |^{\pa}_{D})
           - \varrho^{S} (\pmax) \bigr)\ts
     \vartheta^{D} (\cD) \, + \! \sum_{\cD\preccurlyeq\udo{\cF}\prec\pmax}
      \! \varrho^{D} (\cF) \prod_{j=1}^{\lvert \cF \rvert} 
     \vartheta^{F_{j}} (\cD |^{\pa}_{F_{j}}) \\
     & = \ts\bigl( \psi^{D} (\pmax |^{\pa}_{D} ) 
       + \varrho^{D} (\pmax |^{\pa}_{D}) - \varrho^{S} (\pmax)
       - \psi^{D} (\cD) 
       \bigr) \ts\ts \vartheta^{D} (\cD) \\[3mm]
     & = \ts\bigl(  \psi^{S} (\pmax) -  \psi^{S} (\cA)\bigr) \ts\ts 
     \vartheta^{S} (\cD) \ts , 
\end{split}     
\]
where we used the recursion \eqref{eq:theta-rec} as well as the
identity $\psi^{D} (\cD) = \psi^{S} (\cA)$ and the relation between
$\psi^{S} (\pmax)$ and $\psi^{D} (\pmax |^{\pa}_{D})$, which both
follow from the definition of the decay rates. Since $\psi^{S} (\pmax)
\ne \psi^{S} (\cA)$ by assumption, our claim follows.
\end{proof}

Note that $\vartheta$ coincides with the function $\xi$ used after
Corollary~\ref{coro:all-clear-five} to calculate the recombination
rate dependent terms in Table~\ref{tab:matrix}. Inspecting this table
again together with the recursion for $\theta$ from
Eq.~\eqref{eq:rec}, one sees that a similar reduction as for
$\vartheta$ must be valid for $\theta$, too. Indeed, if $\cA
\preccurlyeq \cB$, one can replace the pair $(\cA,\cB)$ by a reduced
one that emerges via the removal of all \emph{common} singletons in
$\cA$ and $\cB$. With $\theta^{\varnothing} (\varnothing,\varnothing)
:= 1$, the reasoning of the proof of
Proposition~\ref{prop:diag-reduction}, applied to the recursion
\eqref{eq:rec} for $\theta^{U}\!$, gives the following result.

\begin{theorem}\label{thm:reduction}
  Let\/ $S$ and\/ $\VV$ be as in
  Proposition~$\ref{prop:diag-reduction}$. For the generic choice of
  the recombination rates\/ $\varrho^{S}$, the functions\/
  $\theta^{U}\!\in\AAA^{\pa}_{\RR} (\VV (U))$ satisfy the relations
\[
      \theta^{U}\! (\cA,\cB) \, = \,
      \theta^{\supp (\widetilde{\cB}\, )} 
      \bigl(\cA |^{\pa}_{\supp (\widetilde{\cB}\, )},\widetilde{\cB}\, \bigr)
\]    
for all\/ $\cA,\cB\in\VV (U)$ with\/ $\cA\preccurlyeq \cB$ and all\/
$\varnothing\ne U\nts\subseteq S$. Here, $\widetilde{B}$ is any
partition that emerges from\/ $\cB$ by the removal of an arbitrary
number of singletons, including the case\/
$\widetilde{\cB}=\cB^{\prime}$. \qed
\end{theorem}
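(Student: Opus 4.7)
The plan is to imitate the proof of Proposition~\ref{prop:diag-reduction} almost verbatim, but carrying the off-diagonal argument $\cA$ along. It suffices to prove the statement for $\widetilde{\cB}$ obtained by removing a single common singleton; the general case follows by iteration, and the case $\widetilde{\cB}=\cB$ is trivial. I would proceed by induction on $\lvert U \rvert$, with base cases $\lvert U \rvert \leqslant 2$ where $\theta^{U}$ reduces to the M\"obius function and the claim is direct.

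For the inductive step, assume $\cB = \cD \sqcup \cE$ where $\cE$ consists of singletons that are also singletons of $\cA$, so $\cA = (\cA |^{\pa}_{D}) \sqcup \cE$ with $\cA |^{\pa}_{D} \preccurlyeq \cD$. Set $D = \supp (\cD)$. Starting from the recursion \eqref{eq:rec} applied to $\theta^{U}\nts (\cA,\cB)$, I would rewrite the sum $\sum_{\cB \preccurlyeq \udo{\cC} \prec \pmax}$ by grouping the $\cC$'s according to $\cF := \cC |^{\pa}_{D}$, which necessarily satisfies $\cD \preccurlyeq \cF$. For each part $C_{i}$ of such a $\cC$, one has $\lvert C_{i} \rvert < \lvert U \rvert$, so the induction hypothesis applies and gives
\[
   \theta^{C_{i}} (\cA |^{\pa}_{C_{i}}, \cB |^{\pa}_{C_{i}}) \, =
      \, \theta^{C_{i} \cap D} (\cA |^{\pa}_{C_{i} \cap D},
      \cD |^{\pa}_{C_{i} \cap D}) \ts ,
\]
where parts $C_{i} \subseteq E$ contribute a factor of $1$ by Proposition~\ref{prop:diag-reduction}. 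The product therefore collapses to $\prod_{j=1}^{\lvert \cF \rvert} \theta^{F_{j}}(\cA |^{\pa}_{F_{j}}, \cD |^{\pa}_{F_{j}})$, depending only on $\cF$.

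Next, I would evaluate the inner sum over $\cC$ with $\cC |^{\pa}_{D} = \cF$. Since $\cE$ consists only of singletons of $\cB$, every such $\cC$ automatically satisfies $\cC \succcurlyeq \cB$, so by the marginalisation formula \eqref{eq:rho-sub} the inner sum equals $\varrho^{D}\nts (\cF) - \varrho^{U}\nts (\pmax) \ts \delta (\cF, \pmax |^{\pa}_{D})$, the correction term coming from the exclusion of $\cC = \pmax$. Splitting off $\cF = \pmax |^{\pa}_{D}$ and recognising the remaining sum via the recursion \eqref{eq:rec} for $\theta^{D}(\cA |^{\pa}_{D}, \cD)$ yields
\[
  (\psi^{U}\nts(\pmax) - \psi^{U}\nts (\cB))\ts \theta^{U}\nts (\cA,\cB)
  \; = \, \bigl( \psi^{D}(\pmax |^{\pa}_{D}) + \varrho^{D}(\pmax |^{\pa}_{D})
  - \varrho^{U}(\pmax) - \psi^{D}(\cD) \bigr)\ts \theta^{D}(\cA |^{\pa}_{D}, \cD) \ts .
\]
To conclude, I would verify the two identities $\psi^{U}\nts (\cB) = \psi^{D}(\cD)$ (immediate from \eqref{eq:psi-two} since the singleton parts of $\cE$ contribute nothing) and $\psi^{U}\nts (\pmax) = \psi^{D}(\pmax |^{\pa}_{D}) + \varrho^{D}(\pmax |^{\pa}_{D}) - \varrho^{U}\nts (\pmax)$, the latter via $\psi^{D}(\pmax |^{\pa}_{D}) + \varrho^{D}(\pmax |^{\pa}_{D}) = \sum_{\cF} \varrho^{D}(\cF) = \sum_{\cC \in \VV} \varrho^{U}(\cC)$ using \eqref{eq:rho-sub}. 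Since the prefactor $\psi^{U}\nts(\pmax) - \psi^{U}\nts(\cB)$ is nonzero by the generic hypothesis, the claim follows. The main obstacle is the bookkeeping in the marginalisation step—specifically, making sure that every $\cC$ with $\cC |^{\pa}_{D} = \cF$ contributes exactly once and that the exclusion of $\cC = \pmax$ is handled cleanly—but once the decomposition $C_{i} = (C_{i}\cap D) \cup (C_{i}\cap E)$ is in place, this mirrors the proof of Proposition~\ref{prop:diag-reduction} exactly.
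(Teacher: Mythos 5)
Your proposal is correct and is exactly the route the paper takes: the paper's own ``proof'' consists of the single remark that the reasoning of Proposition~\ref{prop:diag-reduction}, applied to the recursion \eqref{eq:rec} for $\theta^{U}$, yields the result, and your write-up carries out precisely that transfer (grouping the $\cC$'s by $\cC|^{\pa}_{D}$, collapsing the product via the induction hypothesis, evaluating the inner sum as $\varrho^{D}(\cF) - \varrho^{U}(\pmax)\,\delta(\cF,\pmax|^{\pa}_{D})$, and matching the prefactors through $\psi^{U}(\cB)=\psi^{D}(\cD)$ and the relation between $\psi^{U}(\pmax)$ and $\psi^{D}(\pmax|^{\pa}_{D})$). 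The key observation you supply --- that $\cC|^{\pa}_{D}\succcurlyeq\cD$ already forces $\cC\succcurlyeq\cB$ because the singletons in $\cE$ impose no constraint --- is exactly what makes the bookkeeping close, so the details check out.
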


This reduction property helps to better understand the values of the
$\theta$-coefficients and their recursive structure. Moreover, it also
explains why and how the entries of the matrix in
Eq.~\eqref{eq:theta-4-sites} reappear within Table~\ref{tab:matrix}
for five sites, once for partitions that comprise the singleton $\{ 5
\}$ and once for those comprising $\{ 1 \}$.

Invoking Lemma~\ref{lem:inverse} from the Appendix and observing that,
for $\cA \preccurlyeq \cB$, $\chn (\cA,\cB)$ and $\chn
(\cA|^{\pa}_{\supp (\cB^{\prime})}, \cB^{\prime})$ have the same
cardinality, it is clear that the reduction property for $\theta$
extends to its inverse $\eta$ as follows.
\begin{coro}\label{coro:inv-reduction}
  Let the assumption be as in Proposition~$\ref{prop:diag-reduction}$
  and Theorem~$\ref{thm:reduction}$, and assume that the recombination
  rates are chosen such that\/ $\theta^{S} \in \AAA^{\pa}_{\RR} (\VV)$
  is well-defined and invertible, with inverse\/ $\eta^{S}$. Then, one
  also has
\[
      \eta^{S} (\cA,\cB) \, = \,
      \eta^{\supp (\widetilde{\cB}\, )} 
      \bigl(\cA |^{\pa}_{\supp (\widetilde{\cB}\, )},\widetilde{\cB}\, \bigr)
\]    
for all\/ $\cA,\cB\in\VV$ with\/ $\cA\preccurlyeq \cB$. As before,
$\widetilde{B}$ is any partition that emerges from\/ $\cB$ by the
removal of an arbitrary number of singletons, including the case\/
$\widetilde{\cB}=\cB^{\prime}$. \qed
\end{coro}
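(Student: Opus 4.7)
The plan is to reduce the statement for $\eta^{S}$ to the already established reduction for $\theta^{S}$ by invoking the closed, non-recursive formula for the inverse of an element of an incidence algebra from Lemma~\ref{lem:inverse} in the Appendix. That formula expresses $\eta^{S}(\cA,\cB)$ as a weighted sum over the set $\chn(\cA,\cB)$ of chains from $\cA$ to $\cB$ in $\VV$, where the contribution of a chain $\cA = \cC^{\pa}_{0} \prec \cC^{\pa}_{1} \prec \dots \prec \cC^{\pa}_{k} = \cB$ is a product of reciprocals of the diagonal values $\theta^{S}(\cC^{\pa}_{i},\cC^{\pa}_{i})$ together with the off-diagonal factors $\theta^{S}(\cC^{\pa}_{i-1},\cC^{\pa}_{i})$. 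The strategy is to match these chain contributions with those that define $\eta^{\supp(\widetilde{\cB})}(\cA|^{\pa}_{\supp(\widetilde{\cB})},\widetilde{\cB})$.

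The central observation is a chain bijection. Let $T \subseteq S$ be the set of indices on which $\cA$ and $\cB$ share a common singleton part that is removed to form $\widetilde{\cB}$, and set $U := \supp(\widetilde{\cB})$. For any $\cC$ with $\cA \preccurlyeq \cC \preccurlyeq \cB$, each $\{i\}\subseteq T$ must also be a part of $\cC$, since the part of $\cC$ containing $i$ is sandwiched between $\{i\}$ and $\{i\}$. Consequently, the restriction map $\cC \mapsto \cC|^{\pa}_{U}$ is a strict-order-preserving bijection from the interval $[\cA,\cB]$ in $\VV$ onto the interval $\bigl[\cA|^{\pa}_{U},\widetilde{\cB}\ts\bigr]$ in $\VV(U)$, with inverse obtained by re-adjoining the singletons indexed by $T$. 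This yields a bijection between $\chn(\cA,\cB)$ and $\chn\bigl(\cA|^{\pa}_{U},\widetilde{\cB}\ts\bigr)$ preserving chain length, which is precisely the cardinality remark in the statement.

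Next, apply Theorem~\ref{thm:reduction} term by term. Along any chain $\cA = \cC^{\pa}_{0} \prec \dots \prec \cC^{\pa}_{k} = \cB$, each successive pair $(\cC^{\pa}_{i-1},\cC^{\pa}_{i})$ inherits all common singletons from $(\cA,\cB)$, so
\[
   \theta^{S}(\cC^{\pa}_{i-1},\cC^{\pa}_{i}) \, = \,
   \theta^{U_i}\bigl(\cC^{\pa}_{i-1}|^{\pa}_{U_i},\cC^{\pa}_{i}|^{\pa}_{U_i}\bigr)
\]
where $U_i = \supp\bigl((\cC^{\pa}_{i})^{\widetilde{}}\ts\bigr)$ contains $U$. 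A second application of Theorem~\ref{thm:reduction} reduces the right-hand side further to a value in $\VV(U)$. For the diagonal factors, Proposition~\ref{prop:diag-reduction} gives $\theta^{S}(\cC^{\pa}_{i},\cC^{\pa}_{i}) = \theta^{U}(\cC^{\pa}_{i}|^{\pa}_{U},\cC^{\pa}_{i}|^{\pa}_{U})$. Putting these identities together, the chain contribution on the $S$-side coincides with the corresponding chain contribution on the $U$-side under the bijection above, and summation yields the claimed equality.

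The main obstacle I expect is verifying carefully that the chain bijection is compatible with the reduction on every factor simultaneously, i.e.\ that no common singleton between $\cA$ and $\cB$ is ever split in an intermediate $\cC^{\pa}_{i}$, so that the same $T$ may be removed uniformly along the whole chain. Once this monotonicity is established (which follows from the elementary sandwich argument above), the rest of the proof is a bookkeeping exercise combining Lemma~\ref{lem:inverse} with Proposition~\ref{prop:diag-reduction} and Theorem~\ref{thm:reduction}.
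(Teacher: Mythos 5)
Your proposal is correct and follows essentially the same route as the paper, which proves the corollary by invoking the chain-sum inversion formula of Lemma~\ref{lem:inverse} together with the observation that $\chn(\cA,\cB)$ and $\chn\bigl(\cA|^{\pa}_{\supp(\widetilde{\cB}\,)},\widetilde{\cB}\,\bigr)$ are in length-preserving bijection, and then applying the reduction properties of Proposition~\ref{prop:diag-reduction} and Theorem~\ref{thm:reduction} factorwise. Your write-up in fact supplies more detail than the paper (notably the sandwich argument showing that the common singletons persist along every chain), and your ``two applications'' of Theorem~\ref{thm:reduction} per off-diagonal factor can be collapsed into one, since that theorem already allows removing an arbitrary set of singletons in a single step.
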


Let us finally take a closer look at the structure of the probability
vectors $a^{\pa}_{t}$ as a function of time, first in the simpler case
that $\psi=\chi$, with $\rtot{} := \sum_{\cA\in\VV} \varrho (\cA)$.
\begin{prop}\label{prop:lin-Markov}
  Let\/ $S$ be a finite set and consider the lattice\/ $\II (S)$.
  Moreover, assume that the recombination rates\/ $\varrho (\cA)$ are
  chosen so that\/ $\psi = \chi$ on\/ $\II (S)$. Then, the solution\/
  $a^{\pa}_{t}$ of Eq.~\eqref{eq:a-ODE} with initial condition\/
  $a^{\pa}_{0} (\cA) = \delta (\cA,\pmax)$ also solves the linear ODE
  system
 \[
 \begin{split}
       \dot{a}^{\pa}_{t} (\cA) \, & = \, - \sum_{\cB \succcurlyeq \cA}
       Q (\cA, \cB) \, a^{\pa}_{t} (\cB)
       \qquad \text{with} \\[1mm]
       Q (\cA,\cB) \, & = \, - \!\sum_{\cC \in [\cA,\cB]}\!
       \mu (\cA,\cC) \, \chi (\cC) \, = \,
       - \ts \rtot{} \, \delta (\cA,\cB) \, + \sum_{\cC \succcurlyeq \cA}
       \varrho (\cC) \, \delta (\cA, \cB\nts\wedge\cC) \ts , 
 \end{split}      
 \]   
     where\/ $Q$ is $($the transpose of$\, )$ a Markov generator,
     with the diagonal entries\/ $Q (\cA,\cA) = - \chi (\cA)$
     being the eigenvalues of\/ $Q$.
 \end{prop}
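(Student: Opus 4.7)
My plan is to verify the statement by differentiating the explicit solution supplied by Proposition~\ref{prop:reduction-to-linear}, which (under the assumption $\psi = \chi$) reads
\[
   a^\pa_t(\cA) \, = \sum_{\cB \succcurlyeq \cA} \mu(\cA, \cB) \, \ee^{-\chi(\cB) \ts t} .
\]
The key observation is that this relation is a M\"obius transform in $\cB$: writing $f_t(\cB) := \ee^{-\chi(\cB) \ts t}$, one has $a^\pa_t(\cA) = \sum_\cB \mu(\cA, \cB) f_t(\cB)$, whose inverse through $\zeta = \mu^{-1}$ gives $f_t(\cB) = \sum_{\cC \succcurlyeq \cB} a^\pa_t(\cC)$. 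Differentiating the closed form term by term and substituting this back for $f_t$ converts the sum of exponentials into a linear combination of the values $a^\pa_t(\cC)$, which is precisely the linear ODE we are after.

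Concretely, I would start from $\dot{a}^\pa_t(\cA) = -\sum_{\cB \succcurlyeq \cA} \mu(\cA, \cB) \chi(\cB) f_t(\cB)$, insert the inverse formula for $f_t(\cB)$, and interchange the order of summation. The coefficient of $a^\pa_t(\cC)$ for each $\cC \succcurlyeq \cA$ then comes out as exactly $-\sum_{\cB \in [\cA, \cC]} \mu(\cA, \cB) \chi(\cB)$, which is the first displayed form of $Q(\cA, \cC)$, and the diagonal case $\cC = \cA$ collapses to $-\chi(\cA)$ by $\mu(\cA,\cA) = 1$. To pass to the second form, I would substitute the identity $\chi(\cB) = \sum_{\cD \not\succcurlyeq \cB} \varrho(\cD)$ and swap sums so that $\cD$ is outside. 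For each fixed $\cD$, the inner sum splits as $\sum_{\cB \in [\cA, \cC]} \mu(\cA, \cB) - \sum_{\cB \in [\cA, \cC \wedge \cD]} \mu(\cA, \cB)$, and the standard M\"obius identity $\sum_{\cA \preccurlyeq \cB \preccurlyeq \cX} \mu(\cA, \cB) = \delta(\cA, \cX)$ collapses this to $\delta(\cA, \cC) - \delta(\cA, \cC \wedge \cD)$. Summing over $\cD$ then yields $-\rtot{} \ts \delta(\cA, \cC) + \sum_{\cD \succcurlyeq \cA} \varrho(\cD) \ts \delta(\cA, \cC \wedge \cD)$, which is the second form.

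The Markov generator properties then follow directly. The off-diagonal entries $Q(\cA, \cB) = \sum_{\cD \succcurlyeq \cA, \ts \cB \wedge \cD = \cA} \varrho(\cD)$ are manifestly nonnegative, and for each fixed $\cB$ the column sum telescopes as $\sum_\cA Q(\cA, \cB) = -\rtot{} + \sum_\cD \varrho(\cD) \cdot 1 = 0$, since every $\cD$ contributes once via the unique $\cA = \cB \wedge \cD$. Upper triangularity of $Q$ with respect to $\preccurlyeq$ then makes the diagonal entries $-\chi(\cA)$ its eigenvalues. The main technical obstacle lies in the complement-of-interval bookkeeping when translating between the two representations of $Q$; in particular, one has to check that the term $\delta(\cA, \cC \wedge \cD)$ automatically drops out in the case $\cA \not\preccurlyeq \cD$, so that the sum over $\cD$ can be written as a sum over $\cD \succcurlyeq \cA$ alone. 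Everything else reduces to routine interchanges of finite sums, and the resulting linear ODE with the stated initial condition $a^\pa_0(\cA) = \delta(\cA, \pmax)$ is then uniquely solved by $a^\pa_t$.
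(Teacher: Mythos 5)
Your proposal is correct and follows essentially the same route as the paper: differentiate the closed form $a^{\pa}_{t}(\cA)=\sum_{\cB\succcurlyeq\cA}\mu(\cA,\cB)\,\ee^{-\chi(\cB)\ts t}$, re-express the exponentials via M\"obius inversion to get the first formula for $Q$, then insert the definition of $\chi$ and collapse the inner M\"obius sums (the paper phrases this as $[\cA,\cB]\cap[\cA,\cD]=[\cA,\cB\wedge\cD]$ rather than as a difference of two interval sums, but that is the same computation), and finish with the identical column-sum and upper-triangularity arguments. No gaps.
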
    

\begin{proof}
  Under the assumptions, we know $a^{\pa}_{t} (\cA) =
  \sum_{\cC\succcurlyeq\cA} \mu(\cA,\cC) \, \ee^{-\chi (\cC)\ts t}$
  from Proposition~\ref{prop:reduction-to-linear} together with
  $\ee^{-\chi (\cC)\ts t} = \sum_{\cB\succcurlyeq\cC} a^{\pa}_{t}
  (\cB)$ by M\"{o}bius inversion. Consequently, one has
\[
\begin{split}
     \dot{a}^{\pa}_{t} (\cA) \, & = \, 
     - \sum_{\cC\succcurlyeq\cA} \mu(\cA,\cC)\,
     \chi(\cC)\, \ee^{-\chi (\cC)\ts t} \, = \,
     - \sum_{\cC\succcurlyeq \cA} \mu(\cA,\cC) \, \chi(\cC)
     \sum_{\cB\succcurlyeq\cC} a^{\pa}_{t} (\cB)   \\[1mm]
     & = \, \sum_{\cB\succcurlyeq\cA} \biggl( - \!
     \sum_{\cC\in[\cA,\cB]}\! \mu(\cA,\cC) \, \chi(\cC) \biggr)
     \, a^{\pa}_{t} (\cB) \ts ,
\end{split}
\]
which proves the first claim and the first formula for $Q$.

Clearly, $Q (\cA,\cB) = 0$ whenever $\cA \not\preccurlyeq \cB$. If
$\cA \preccurlyeq \cB$, one uses Eq.~\eqref{eq:chi-sum} to get
\[
\begin{split}
   Q (\cA,\cB) \, & = \, - \! \sum_{\cC\in[\cA,\cB]} \! \mu (\cA,\cC) \,
   \chi(\cC) \,   = \, - \! \sum_{\cC\in[\cA,\cB]} \! \mu (\cA,\cC)
   \biggl( \rtot{} - \! \sum_{\cD\in[\cC,\pmax]} \varrho (\cD) \biggr) \\[1mm]
   & = \, - \ts \rtot{} \, \delta (\cA,\cB) \, + \sum_{\cD \succcurlyeq \cA}
   \varrho (\cD) \! \sum_{\cC \in [\cA,\cD] \cap [\cA,\cB]} \! \mu (\cA,\cC) \ts .
\end{split}   
\]
Since $[\cA,\cB]\cap[\cA,\cD] = [\cA,\cB\nts\wedge\cD]$, the last sum
is $\sum_{\cC\in[\cA,\cB\nts\wedge\cD]}\mu(\cA,\cC) = \delta
(\cA,\cB\nts\wedge\cD)$ and the second formula for $Q$ follows.

All off-diagonal terms of $Q$ are then clearly non-negative, while
\[
    \sum_{\cA\in\II (S)} \!\! Q (\cA,\cB) \, = \, - \ts \rtot{} +
    \sum_{\cA\preccurlyeq\cB} \, \sum_{\cC\succcurlyeq\cA}
    \varrho (\cC)\, \delta(\cA, \cB\nts\wedge\cC) \, = \,
    -\ts\rtot{} + \!\! \sum_{\cC\in\II (S)} \!\! \varrho (\cC) \!
    \! \sum_{\cA\preccurlyeq\cB\nts\wedge\cC} 
    \!\! \delta (\cA, \cB\nts\wedge\cC) \, = \, 0
\]
because $\cB\nts\wedge\cC$ is a unique partition in $\II (S)$, so that
the last sum is always $1$. This shows the claimed Markov generator
property. The diagonal entries are clear from the first formula for
$Q$. They are the eigenvalues because $Q$ is upper triangular.
\end{proof}

Note that the second expression for $Q$ in
Proposition~\ref{prop:lin-Markov} is slightly misleading in the sense
that it suggests that all rates $\varrho (\cC)$ with $\cB\nts\wedge\cC
= \cA$ contribute to $Q (\cA,\cB)$. However, the condition $\psi =
\chi$ forces $\varrho$ to vanish for many partitions as $\lvert S
\rvert$ increases. In fact, as we shall see from
Eq.~\eqref{eq:Q-formula} below, $Q (\cA,\cB) > 0$ is only possible if
$\cA \prec \cB$ and if $\cA$ refines precisely one part of $\cB$. This
is a somewhat surprising consequence of the underlying stochastic
partitioning process discovered in \cite[Sec.~6]{main}. To explore
this connection, one can now apply an analogous reasoning as in
Proposition~\ref{prop:lin-Markov} to the general generic case, with
the following result.
\begin{theorem}\label{thm:gen-Markov}
  Let\/ $\VV$ be an indecomposable and irreducible lattice of
  partitions of a finite set and assume that the recombination rates\/
  $\varrho (\cA)$ are such that\/ $\theta \in \AAA^{\pa}_{\RR} (\VV)$
  is well-defined and invertible, with inverse\/ $\eta$.  Then, the
  solution\/ $a^{\pa}_{t}$ of the ODE system~\eqref{eq:a-ODE} with
  initial condition\/ $a^{\pa}_{0} (\cA) = \delta (\cA,\pmax)$ also
  solves the\/ \emph{linear} ODE system
 \[
       \dot{a}^{\pa}_{t} (\cA) \,  = \, - \sum_{\cB \succcurlyeq \cA}
       Q (\cA, \cB) \, a^{\pa}_{t} (\cB)
       \quad \text{with} \quad
       Q (\cA,\cB) \,  = \, - \!\sum_{\cC \in [\cA,\cB]}\!
       \theta (\cA,\cC) \, \psi (\cC) \, \eta (\cC,\cB) \ts ,
 \] 
 where\/ $Q$ is $($the transpose of$\, )$ a Markov generator.
 Moreover, $Q$ is upper triangular with diagonal entries\/ $Q
 (\cA,\cA) = -\psi (\cA)$, which are the eigenvalues of\/ $Q$.
 \end{theorem}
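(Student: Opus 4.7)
The plan is to mimic the proof of Proposition~\ref{prop:lin-Markov}, with $\mu$ and $\zeta$ replaced by their generic counterparts $\theta$ and $\eta$. Concretely, I would start from the explicit form
\[
    a^{\pa}_{t} (\cA) \, = \sum_{\cC \succcurlyeq \cA}
    \theta (\cA,\cC) \, \ee^{-\psi (\cC) \ts t}
\]
given by Eq.~\eqref{eq:theta-sum}. Since $\theta$ is invertible in $\AAA^{\pa}_{\RR} (\VV)$ by assumption, the identity $\theta * \eta = \delta$ inverts this relation, yielding
\[
   \ee^{-\psi (\cC) \ts t} \, = \sum_{\cB \succcurlyeq \cC}
   \eta (\cC,\cB) \, a^{\pa}_{t} (\cB) \ts .
\]
Differentiating the ansatz and substituting this expression then produces the linear ODE, and a change of summation order reveals $Q(\cA,\cB)$ in the claimed form.

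Next, $Q(\cA,\cB)=0$ whenever $\cA\not\preccurlyeq\cB$ follows directly from the supports of $\theta$ and $\eta$ in the incidence algebra, so $Q$ is upper triangular. For the diagonal, only $\cC=\cA=\cB$ contributes, giving $Q(\cA,\cA)=-\theta(\cA,\cA)\,\psi(\cA)\,\eta(\cA,\cA)=-\psi(\cA)$ via the identity $\theta(\cA,\cA)\,\eta(\cA,\cA)=1$, which comes from evaluating $\theta*\eta=\delta$ at $(\cA,\cA)$. Upper triangularity then yields the eigenvalue claim at once.

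For the generator property, I would first establish the column sum identity. Norm preservation of the recombination flow forces $\sum_{\cA\in\VV} a^{\pa}_{t}(\cA)=1$ for all $t\geqslant 0$, and in the generic case the exponentials $\ee^{-\psi(\cC)\ts t}$ are linearly independent, whence $\sum_{\cA\preccurlyeq\cC}\theta(\cA,\cC)=\delta(\cC,\pmin)$. Plugging this into the defining sum for $\sum_{\cA}Q(\cA,\cB)$ collapses it to $-\psi(\pmin)\,\eta(\pmin,\cB)=0$, as desired.

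The main obstacle will be the non-negativity of the off-diagonal entries $Q(\cA,\cB)$ for $\cA\prec\cB$, which is not visible from the compact formula involving $\theta$, $\psi$ and $\eta$. I would tackle this by seeking a second, manifestly non-negative representation, in analogy with the identity
\[
   Q(\cA,\cB) \, = \, - \ts \rtot{} \, \delta(\cA,\cB)
   \, + \sum_{\cC \succcurlyeq \cA} \varrho(\cC)\,\delta(\cA,\cB\nts\wedge\cC)
\]
of Proposition~\ref{prop:lin-Markov}. The most natural route is to exploit the recursion \eqref{eq:rec} for $\theta$ together with the reduction properties from Theorem~\ref{thm:reduction} and Corollary~\ref{coro:inv-reduction}, which should collapse the triple sum and expose an expression involving only induced recombination rates $\varrho^{U}$ and structural Kronecker factors. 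Failing a clean algebraic identity, an alternative is the probabilistic route: by \cite[Sec.~6]{main} the ODE for $a^{\pa}_{t}$ is the forward equation of a partitioning Markov process, and a direct reading of its jump rates gives the non-negativity, while the agreement with the formula here follows from uniqueness of the generator determined by $a^{\pa}_{t}$ and the initial condition (using that subsystem marginalisations provide enough linearly independent trajectories to pin down each column of $Q$).
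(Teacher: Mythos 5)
Your proposal is correct and follows essentially the same route as the paper: differentiate the mode expansion \eqref{eq:theta-sum}, invert via $\eta$, read off upper triangularity and the diagonal from $\theta(\cA,\cA)\,\eta(\cA,\cA)=1$, kill the column sums with $\sum_{\cA\preccurlyeq\cC}\theta(\cA,\cC)=\delta(\pmin,\cC)$ (which the paper simply cites from \cite[Prop.~8\,(2)]{main} rather than rederiving from norm preservation and linear independence of the exponentials --- note your derivation tacitly needs distinct $\psi$-values, slightly narrower than the stated hypothesis), and obtain non-negativity of the off-diagonal entries from the partitioning process of \cite[Sec.~6]{main} via the Kolmogorov backward-to-forward switch. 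Be aware that your preferred ``algebraic'' route to non-negativity is exactly the verification the paper declares lengthy and omits, so the probabilistic argument you list as a fallback is in fact the proof; your explicit appeal to uniqueness of the generator determined by the trajectories is a point the paper glosses over.
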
    
 \begin{proof}
   The first claim follows from Eq.~\eqref{eq:theta-sum} in
   conjunction with Theorem~\ref{thm:theta-rec}. The calculation runs
   as in the proof of Proposition~\ref{prop:lin-Markov}, now with
   $\ee^{-\psi(\cC)\ts t} = \sum_{\cB\succcurlyeq\cC} \eta (\cC,\cB)\,
   a^{\pa}_{t} (\cB)$ via the inversion formula for
   Eq.~\eqref{eq:theta-sum}. Clearly, we have $Q (\cA,\cB) = 0$
   whenever $\cA \not\preccurlyeq\cB$, which implies that $Q$ is upper
   triangular. The diagonal entries are then the eigenvalues, and
   their values are a consequence of $\eta (\cA,\cA) = 1/\theta
   (\cA,\cA)$.

Next, for fixed $\cB \in \VV$, one finds the column sums
 \[
    \sum_{\cA\in\VV} Q (\cA,\cB) \, = \, -  \sum_{\cA\preccurlyeq\cB}
    \, \sum_{\cC\in[\cA,\cB]} \theta (\cA,\cC) \, \psi (\cC)\,
    \eta (\cC,\cB) \, = \sum_{\cC\preccurlyeq\cB} \psi (\cC) \,
    \eta (\cC,\cB) \sum_{\cA\preccurlyeq\cC} \theta (\cA,\cC) \ts .
 \]
 Since the last sum is $\delta (\pmin, \cC)$ by \cite[Prop.~8
 (2)]{main}, which holds in this generality, one has
 \[
    \sum_{\cA\in\VV} Q (\cA,\cB) \, = \, \psi (\pmin) \, \eta (\pmin, \cB)
    \,  = \, 0
 \]
 from $\psi (\pmin) = 0$, as is necessary for (the transpose of) a
 Markov generator. Since $Q$ is upper triangular, it remains to show
 that $Q (\cA, \cB) \geqslant 0$ whenever $\cA \prec \cB$.

To this end, we employ the underlying partitioning process from
\cite[Sec.~6]{main}. Our coefficients $a^{\pa}_{t} (\cA)$ were
identified as the first row of a Markov semigroup, namely as
transition probabilities $\bs{P}^{\pa}_{t} (\pmax \to \cA)$. The
corresponding Markov generator $\widetilde{Q}$ could be identified by
also considering $\bs{P}^{\pa}_{t} (\cB \to \cA)$ for an arbitrary
$\cB \in \VV$, leading to the so-called Kolmogorov \emph{backward}
equation. By an application of \cite[Thm.~2.1.1]{N}, we can switch to
the corresponding \emph{forward} equation with the same generator,
thus giving a linear ODE system for the quantities $\bs{P}^{\pa}_{t}
(\pmax \to \cA)$ and hence $a^{\pa}_{t} (\cA)$ \emph{alone}, which is
nothing but the equation stated in the theorem, with $Q =
\widetilde{Q}^{T}$. Consequently, all off-diagonal elements of $Q$ are
non-negative.
\end{proof}

 Since it is difficult to calculate the off-diagonal elements of
 $Q$ from the formula in Theorem~\ref{thm:gen-Markov}, we employ
 \cite[Thm.~3]{main} to determine them. As mentioned earlier, $Q (\cA,
 \cB)>0$ requires $\cA \prec \cB$ with $\cA$ refining precisely one
 part of $\cB$.  So, we may assume this part to be $B^{\pa}_{1}$,
 hence $\cB = \{ B^{\pa}_{1}, B^{\pa}_{2}, \dots , B^{\pa}_{r}\}$ with
 $r = \lvert \cB\rvert$ and $\cA = \{ A^{\pa}_{1}, \dots ,A^{\pa}_{s},
 B^{\pa}_{2}, \dots , B^{\pa}_{r}\}$, with $s\geqslant 2$ and obvious
 meaning for $r=1$, where $\lvert\cA\rvert = s+r-1$ and
 $\cA|^{\pa}_{B^{\pa}_{1}} = \{ A^{\pa}_{1}, \dots ,A^{\pa}_{s}\}$.
 Then, we get
\begin{equation}\label{eq:Q-formula}
    Q (\cA, \cB) \, = \, \varrho^{B^{\pa}_{1}} (\cA|^{\pa}_{B^{\pa}_{1}})
    \, =\!\!\! \sum_{\substack{\cC\in\VV\; \\
     \; \cC|^{\pa}_{B^{\pa}_{1}}\! =\, \cA|^{\pa}_{B^{\pa}_{1}} }} 
    \!\!\! \varrho (\cC) \ts ,
\end{equation}
which clearly is always non-negative. One can now calculate that
$\sum_{\cA\prec\cB} Q (\cA,\cB) = \psi (\cB)$, in line with $Q$ having
vanishing column sums.  It is also instructive to calculate the matrix
$Q$ for the lattice $\VV = \II (\{1,2,3,4\})$ in both ways (via the
formula from Theorem~\ref{thm:gen-Markov} and via
Eq.~\eqref{eq:Q-formula}) to get an impression of the miraculous
cancellations that happen along the way. The general verification of
the equality of the two formulas is more difficult, and can partly be
based on the reduction properties for $\theta$ and $\eta$, but also
needs some lengthy calculations that we omit.

Now, the Markov generator $Q$ has a surprisingly simple form, and
certainly no singularity of any kind. In fact, one can calculate it
for $\VV = \PP (S)$, where it holds for \emph{all} values of the
recombination rates $\varrho$, and then restrict it to any sublattice,
even to decomposable or reducible ones. The role of the somewhat
tricky degeneracies appear in a new light then, too: Depending on the
values of the parameters, the matrix $Q$ will or will not be
diagonalisable. In the former case, we are in the regime where the
recursive solution with the ansatz from Eq.~\eqref{eq:theta-sum}
works. In the latter case, we have to employ the well-known theory of
linear ODE systems for Jordan blocks, compare \cite[Thm.~12.7]{Amann}
or \cite[Ch.~17.VII]{Wal}, thus giving the appearance of terms of the
form $t^m \ee^{-\alpha\ts t}$ another and more transparent meaning.
Further ramifications of this connection will be explored in a
separate publication \cite{new}.

\clearpage

\section*{Appendix: An inversion lemma for the incidence 
algebra}

Let $L$ be a \emph{poset} with ordering relation $\preccurlyeq$. We
assume $L$ to be \emph{locally finite}, which means that all intervals
$[\cA,\cB]$ in $L$ are finite; compare \cite[Chs.~II and IV]{Aigner}
for background. Now, let $\AAA^{\pa}_{K} (L)$ be the corresponding
\emph{incidence algebra} (over any field $K$ of characteristic $0$).
This is an associative $K$-algebra with the Kronecker function
$\delta$ as its (two-sided) identity. Any element of the algebra,
$\alpha$ say, satisfies $\alpha (\cA,\cB) = 0$ whenever $\cA,\cB \in
L$ with $\cA \not\preccurlyeq \cB$, and the multiplication of two
elements is defined by
\[
      (\alpha * \beta) (\cA,\cB) \; = 
      \sum_{\cC \in [\cA,\cB]} \alpha (\cA,\cC) \, \beta(\cC,\cB) \ts .
\]
An element $\alpha\in\AAA (L)$ is \emph{invertible} if and only if
$\alpha (\cA,\cA) \ne 0$ for all $\cA\in L$. In this case, its unique
inverse $\alpha^{-1}$ (which is both the left and the right inverse,
so $\alpha^{-1} * \alpha = \alpha * \alpha^{-1} = \delta$) is
recursively given by $\alpha^{-1} (\cA,\cA) = 1/\alpha (\cA,\cA)$
together with
\begin{equation}\label{eq:rec-inverse}
     \alpha^{-1} (\cA,\cB) \, = \,
     \frac{-1}{\alpha (\cA,\cA)} \sum_{\cA \prec \udo{\cD} \preccurlyeq \cB}
     \alpha (\cA,\cD) \, \alpha^{-1} (\cD,\cB) \ts .
\end{equation}

It is sometimes useful to have a closed expression for $\alpha^{-1}$
that is not recursive in nature. One classic identity, see
\cite[Thm.~3.3.22]{SD}, uses a summation over chains. If $\cA
\preccurlyeq \cB$, a \emph{chain} from $\cA$ to $\cB$ is any
increasing sequence $G \subseteq L$ of the form
\[
     \cA = \cG_1 \prec \cG_2 \prec \dots \prec \cG_{\ell} = \cB \ts ,
\]
where $\lvert G \rvert = \ell$ is called the \emph{length} of $G$
(note that some authors use $\lvert G \rvert - 1$ for the length). We
use $\chn (\cA,\cB)$ to denote the set of all chains from $\cA$ to
$\cB$.  When $\cA \not \preccurlyeq \cB$, $\chn (\cA,\cB) =
\varnothing$ as usual.

\begin{lemma}\label{lem:inverse}
   If\/ $\alpha \in \AAA (L)$ is invertible, its inverse is given by
\[
    \alpha^{-1} (\cA,\cB) \, = \sum_{G \in \chn (\cA,\cB)}
    (-1)^{\lvert G \rvert -1} \, 
    \frac{\prod_{i=1}^{\lvert G \rvert - 1} \alpha (\cG_{i}, \cG_{i+1}) }
    {\prod_{j=1}^{\lvert G \rvert} \alpha (\cG_{j} , \cG_{j} ) } \ts ,
\]   
   with the usual convention that an empty product is\/ $1$.
\end{lemma}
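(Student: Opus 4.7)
The plan is to prove the formula by induction on the cardinality of the interval $[\cA,\cB]$, using the fact that the inverse in $\AAA^{\pa}_{K}(L)$ is unique: it will suffice to verify that the proposed closed-form expression satisfies the diagonal condition $\alpha^{-1}(\cA,\cA) = 1/\alpha(\cA,\cA)$ together with the recursion \eqref{eq:rec-inverse}. The base case $\cA = \cB$ is immediate, since $\chn(\cA,\cA)$ contains only the trivial length-one chain $(\cA)$, for which the numerator is an empty product and the denominator is $\alpha(\cA,\cA)$, yielding exactly $1/\alpha(\cA,\cA)$.

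For the inductive step, fix $\cA \prec \cB$ and assume the formula has been established for all pairs $(\cD,\cB)$ with $\cA \prec \cD \preccurlyeq \cB$; such intervals are strictly contained in $[\cA,\cB]$. The key combinatorial observation is the canonical bijection
\[
   \{ (\cD, G) \mid \cA \prec \cD \preccurlyeq \cB,\; G \in \chn(\cD,\cB) \}
   \;\longleftrightarrow\;
   \{ G' \in \chn(\cA,\cB) \mid \lvert G' \rvert \geqslant 2 \},
\]
sending a pair $(\cD, G)$ with $G = (\cG_{1}, \dots, \cG_{\ell})$ and $\cG_{1} = \cD$ to the extended chain $G' = (\cA, \cG_{1}, \dots, \cG_{\ell})$. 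I would substitute the inductive formula into the right-hand side of \eqref{eq:rec-inverse} and re-index along this bijection. Under the transformation, the factor $\alpha(\cA,\cD)$ coming from the recursion merges with the numerator of $\alpha^{-1}(\cD,\cB)$ to produce the full numerator $\prod_{i=1}^{\lvert G' \rvert - 1}\alpha(\cG'_{i},\cG'_{i+1})$ of the chain $G'$; the length shifts by one, turning $(-1)^{\lvert G \rvert - 1}$ into $-(-1)^{\lvert G' \rvert - 1}$; and the missing diagonal factor $\alpha(\cA,\cA)$ in the denominator is supplied precisely by the prefactor $-1/\alpha(\cA,\cA)$ in the recursion, the two minus signs combining into the correct overall sign. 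Since $\cA \prec \cB$, every chain in $\chn(\cA,\cB)$ has length at least two, so the reindexing is a bijection onto the full chain set and nothing is lost.

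The main potential obstacle is purely bookkeeping: keeping the sign, the length index, and the placement of the diagonal factors $\alpha(\cG_{j},\cG_{j})$ consistent under the reindexing. No deeper structure is needed because the decomposition of a chain by its initial edge is unique and the sums are finite by local finiteness of $L$. Once the identity $(\alpha^{-1}_{\mathrm{rec}}(\cA,\cB)) = (\text{chain sum})$ is checked, the uniqueness of the inverse forces the claimed closed form to hold on all of $[\cA,\cB]$, completing the induction.
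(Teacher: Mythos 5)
Your proposal is correct and follows essentially the same route as the paper's own proof: induction (the paper uses maximal chain length, you use the cardinality of $[\cA,\cB]$, which is equivalent in effect), substitution of the inductive formula into the recursion \eqref{eq:rec-inverse}, and the bijection that extends each chain in $\chn(\cD,\cB)$ by the initial step $\cA\prec\cD$. The sign, numerator, and denominator bookkeeping you describe all check out, so nothing is missing.
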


\begin{proof}
  The claim can be established by induction on the maximal chain
  length in $\chn (\cA, \cB)$. Clearly, the formula gives $\alpha^{-1}
  (\cA,\cA) = 1/\alpha(\cA,\cA)$, where $\chn (\cA,\cA)$ consists of
  $G=\{\cA\}$ only. If $\cA \prec \cB$ with the property that the
  maximal chain length between them is $2$, which means that
  $G=\{\cA,\cB\}$ is the only chain from $\cA$ to $\cB$, the formula
  gives
\[
    \alpha^{-1} (\cA,\cB) \, = \, - \, 
    \frac{\alpha (\cA,\cB) } {\alpha (\cA, \cA)\, \alpha (\cB,\cB) }
\]   
which clearly agrees with Eq.~\eqref{eq:rec-inverse} for this case.

Assume now that the formula holds for any pair with maximal chain
length $k\leqslant n$, and consider $\cA \prec \cB$ with maximal chain
length $n+1$. Then, Eq.~\eqref{eq:rec-inverse} gives the relation
\[
   \alpha^{-1} (\cA, \cB) \, = \, - \sum_{\cA \prec \udo{\cC} \preccurlyeq \cB}
   \frac{\alpha (\cA, \cC)}{\alpha(\cA, \cA)} \,
    \alpha^{-1} (\cC, \cB) \ts .
\]
Now, by construction, the maximal chain length between any $\cC$ and
$\cB$ in the sum is bounded by $n$, so that we can use the induction
hypothesis as follows,
\[
\begin{split}
   \alpha^{-1} (\cA, \cB) \, & = 
   \, - \!\sum_{\cA \prec \udo{\cC} \preccurlyeq \cB}
   \frac{\alpha (\cA, \cC)}{\alpha(\cA, \cA)} 
   \sum_{G \in \chn(\cC,\cB)} (-1)^{\lvert G \rvert - 1} \,
    \frac{\prod_{i=1}^{\lvert G \rvert - 1} \alpha (\cG_{i}, \cG_{i+1}) }
    {\prod_{j=1}^{\lvert G \rvert} \alpha (\cG_{j} , \cG_{j} ) } \\[1mm]
   & = \sum_{\{ \cA \} \dot{\cup} \udo{G} \in \chn (\cA,\cB)} (-1)^{\lvert G \rvert}
    \, \frac{\alpha (\cA,\cG_{1})}{\alpha (\cA,\cA)} \,
    \frac{\prod_{i=1}^{\lvert G \rvert - 1} \alpha (\cG_{i}, \cG_{i+1}) }
    {\prod_{j=1}^{\lvert G \rvert} \alpha (\cG_{j} , \cG_{j} ) } \\[1mm]
    & = \sum_{H\in \chn (\cA, \cB)} (-1)^{\lvert H \rvert - 1} \,
     \frac{\prod_{i=1}^{\lvert H \rvert - 1} \alpha (\cH_{i}, \cH_{i+1}) }
    {\prod_{j=1}^{\lvert H \rvert} \alpha (\cH_{j} , \cH_{j} ) }  \ts ,
\end{split}
\]
where the calculation really is a consequence of how each chain from
$\cA$ to $\cB$ can uniquely be decomposed into one step from $\cA$ to
$\cC$, with $\cA \prec \cC \preccurlyeq \cB$, and a chain from $\cC$
to $\cB$.
\end{proof}
\medskip

\section*{Acknowledgements}
It is our pleasure to thank E.~Baake and M.~Salamat for valuable
discussions.  ES is grateful to the FSPM$^2$ of Bielefeld
University for financial support during her stay in Bielefeld.  This
work was also supported by the German Research Foundation (DFG),
within the SPP 1590. \bigskip

\end{document}